\newtheorem{lowerdensitytheory}{Theorem}
\newtheorem{minimumes}[lowerdensitytheory]{Theorem}
\newtheorem{lowerdensitypractice}{Corollary}
\newtheorem{densityconjecture}{Conjecture}
\newtheorem{miseight}[lowerdensitytheory]{Theorem}
\newtheorem{logjes}{Lemma}
\newtheorem{pdisjoint}{Lemma}
\newtheorem{fourtimeseven}[logjes]{Lemma}
\newtheorem{fourtimesodd}[logjes]{Lemma}
\newtheorem{eveneight}[logjes]{Lemma}
\newtheorem{oddeight}[logjes]{Lemma}
\newtheorem{mistwoorfour}[logjes]{Lemma}
\newtheorem{setsoffour}[lowerdensitypractice]{Corollary}
\newtheorem{fngrows}[logjes]{Lemma}
\newtheorem{logbylinear}[logjes]{Lemma}
\newtheorem{basiclog}[logjes]{Lemma}
\newtheorem{taylorlog}[logjes]{Lemma}
\newtheorem{taylore}[logjes]{Lemma}
\newtheorem{nastylog}[logjes]{Lemma}
\newtheorem{anothernastylog}[logjes]{Lemma}
\newtheorem{upperbounde}[logjes]{Lemma}
\newtheorem{basicloglog}[logjes]{Lemma}
\newtheorem{notherone}[logjes]{Lemma}
\newtheorem{growthweak}[lowerdensitytheory]{Theorem}
\begin{document}

\vspace*{-2cm}

\Large
 \begin{center}
On the congruence properties and growth rate of a recursively defined sequence \\ 

\hspace{10pt}

\large
Wouter van Doorn \\

\hspace{10pt}

\end{center}

\hspace{10pt}

\normalsize

\vspace{-10pt}

\centerline{\bf Abstract}

Let $a_1 = 1$ and, for $n > 1$, $a_n = a_{n-1} + a_{\left \lfloor \frac{n}{2} \right \rfloor}$. In this paper we will look at congruence properties and the growth rate of this sequence. First we will show that if \mbox{$x \in \{1, 2, 3, 5, 6, 7 \}$}, then the natural density of $n$ such that $a_n \equiv x \pmod{8}$ exists and equals $\frac{1}{6}$. Next we will prove that if $m \le 15$ is not divisible by $4$, then the lower density of $n$ such that $a_n$ is divisible by $m$, is strictly positive. To put these results in a broader context, we will then posit a general conjecture about the density of $n$ such that $a_n \equiv x \pmod{m}$ for any given $x$ and any $m$ not divisible by $32$. Finally, we will show that there exists a function $f$ such that $n^{f(n)} < a_n < n^{f(n) + \epsilon}$ for all $\epsilon > 0$ and all large enough $n$. 

\section{Introduction}
We are interested in the following sequence: $a_1 = 1$ and, for $n > 1$, $a_n = a_{n-1} + a_{\left \lfloor \frac{n}{2} \right \rfloor}$. In this sequence, the term $a_n$ counts the number of partitions of $2n$ into so-called strongly decreasing parts; see \mbox{\cite{bos}} for more information. In \mbox{\cite{gkm}} the congruence properties of $a_n$ were studied, where in particular, Theorem $3.1$ in \mbox{\cite{gkm}} shows that $a_n$ is never divisible by $4$. On the other hand, it can be shown that for $m \in \{2, 3, 5, 7\}$ there are infinitely many $n$ such that $a_n \equiv 0 \pmod{m}$. The result that $a_n$ is divisible by $7$ for infinitely many $n$ is mentioned in \mbox{\cite{oly}}, a book on problems from Mathematical Olympiads. \\

Generalizing this, for given positive integers $x$ and $m$, one can ask how many $n$ there are such that $a_n \equiv x \pmod{m}$. To this end, let $S_{x,m}(n)$ be the number of $k \in \{1, 2, \ldots, n\}$ such that $a_k \equiv x \pmod{m}$. We then define the limit $d_{x,m} = \displaystyle \lim_{n \rightarrow \infty} \dfrac{S_{x,m}(n)}{n}$ and the limit inferior $\underline{d_{x,m}} = \displaystyle \liminf_{n \rightarrow \infty} \dfrac{S_{x,m}(n)}{n}$. The former may of course not exist, but the latter always will. As mentioned, in \mbox{\cite{gkm}} it is shown that $d_{0,4} = 0$. In Section \ref{modeight} we will extend their result and show that $d_{x,8} = \frac{1}{6}$ whenever $4$ does not divide $x$. \\

For a given modulus $m$, let $x \not\equiv 0 \pmod{4}$ be divisible by the largest odd divisor of $m$. In Section \ref{modthree} we will prove a theorem that implies that checking finitely many cases can be sufficient in order to deduce $\underline{d_{x,m}} > 0$. And with a computer we did indeed check sufficiently many cases for all $m \le 15$ to show non-trivial lower bounds for these lower densities. In Section \ref{modalles} we will then propose a general conjecture for the existence and value of $d_{x,m}$ for all $m \not \equiv 0 \pmod{32}$. Finally, in Section \ref{growthrate} we will prove reasonably tight bounds on $a_n$ as a function of $n$, where the ratio of upper and lower bounds grows slower than $n^{\epsilon}$ for any $\epsilon > 0$. 

\newpage
\section{Congruence classes modulo 8} \label{modeight}
The goal of this section is to show the following result:

\begin{miseight} \label{miseight}
For all $n \in \mathbb{N}$ and for $x \in \{1, 2, 3, 5, 6, 7\}$ we have $S_{x,8}(n) > \frac{1}{6}n - 2\log(n) - 11$. In particular, $d_{x,8} = \frac{1}{6}$ if $x$ is not divisible by $4$.
\end{miseight}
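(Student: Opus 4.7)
The plan is to reduce the six-residue counting problem to an equidistribution statement about a single auxiliary sequence modulo $4$, and then to control that equidistribution by a self-similar recursion that loses only a bounded amount per halving.

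I would first invoke Theorem 3.1 of \cite{gkm}: since $a_n \not\equiv 0 \pmod{4}$, we have $a_n \bmod 8 \in \{1,2,3,5,6,7\}$, so the six counts $S_{x,8}(n)$ for $x$ in that set partition $\{1,\dots,n\}$ and sum to $n$. Consequently a two-sided control of the form $|S_{x,8}(n)-n/6| \le c \log n + c'$ for all six residues is equivalent to the one-sided bound $S_{x,8}(n) \le n/6 + c\log n + c'$, since the matching lower bound follows from $S_{x,8}(n) = n - \sum_{y \ne x} S_{y,8}(n)$.

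The key structural input is the identity $a_{2m+1} = 1 + 2 A_m$, where $A_m := a_1 + a_2 + \cdots + a_m$; this follows by a short induction on $m$ from the observation $a_{2m+1} - a_{2m-1} = 2 a_m$. Reducing modulo $8$, it says that $a_{2m+1} \bmod 8$ is a function of $A_m \bmod 4$ alone, running through $1,3,5,7$ as $A_m$ runs through $0,1,2,3 \pmod{4}$; meanwhile $a_{2m} \equiv 1 + 2 A_m - a_m \pmod{8}$ depends on the pair $(A_m \bmod 4,\, a_m \bmod 8)$. Each $S_{x,8}(n)$ therefore decomposes into an odd-index contribution governed by a single residue condition on $A_m$ and an even-index contribution governed by a joint condition on $(A_m, a_m)$.

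The heart of the argument, and the main obstacle, is to show that $A_m \bmod 4$ is equidistributed over its attainable residues up to an $O(\log m)$ error. The driving self-similarity is
\[
A_{2k} \equiv A_k + 2k \pmod{4}, \qquad A_{2k+1} \equiv -A_k + 2k + 1 \pmod{4},
\]
both of which follow from the identity $a_{2j-1} + a_{2j} \equiv 2 + a_j \pmod{4}$. Iterating these halving relations $\log_2 n$ times reduces the imbalance at scale $n$ to that at a bounded scale, with only a constant additive loss per level; the losses add up to the $2\log n + O(1)$ in the statement. A subtlety to handle is that the pair $(A_k \bmod 4,\, k \bmod 2)$ is constrained---only four of its eight potential values ever appear, itself a further consequence of Theorem 3.1 of \cite{gkm}---so the induction must be set up on that restricted four-state space rather than pretending $A_k$ is uniform on $\mathbb{Z}/4$. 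Once $A_m \bmod 4$ is controlled, the even-index contribution follows from a parallel but slightly more involved analysis of the joint distribution of $(A_m, a_m)$, and assembling the pieces yields the stated bound $S_{x,8}(n) > \frac{n}{6} - 2\log n - 11$.
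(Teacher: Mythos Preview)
Your approach is genuinely different from the paper's. The paper never introduces the partial sums $A_m$; instead it proves four short lemmas showing that certain explicit blocks of indices carry balanced residues modulo $8$ --- namely that $\{a_{8k+1},a_{8k+3},a_{8k+5},a_{8k+7}\}$ hits every odd residue exactly once, that $\{a_{16k+2},a_{16k+6},a_{16k+10},a_{16k+14}\}$ hits $2$ and $6$ twice each, and that multiplication of the index by $4$ preserves these statements --- and then simply counts how many such disjoint blocks fit below $n$. Your identity $a_{2m+1}=1+2A_m$ and the halving recursions $A_{2k}\equiv A_k+2k$, $A_{2k+1}\equiv -A_k+2k+1\pmod 4$ are correct and attractive, and the resulting four--state dynamics on $(A_k\bmod 4,\,k\bmod 2)$ does mix in two steps, so the odd--residue half of the theorem really does fall out as you describe. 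What your route buys is a unified dynamical picture; what the paper's route buys is that each block is \emph{exactly} balanced, so the only error comes from blocks straddling $n$, which keeps the bookkeeping for the explicit constants $2$ and $11$ short.

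There is, however, a real gap in your treatment of the residues $x\in\{2,6\}$. You describe the even--index analysis as ``parallel but slightly more involved,'' but it is not parallel: the joint state $(A_m\bmod 4,\,a_m\bmod 8,\,m\bmod 2)$ lives on a much larger space than the four--state system you analysed, and you have neither identified its recurrent class nor verified that its doubling map equidistributes with bounded loss per level. (Concretely, for $m$ odd one has $a_{2m}=2(A_m-A_{(m-1)/2})$, so already the first layer needs the \emph{joint} law of $A$ at two consecutive scales; deeper $2$--adic valuations require further iteration.) This is not a fatal obstruction --- the computation can be carried out and does give the right densities --- but as written the even case is an assertion rather than an argument, and it is exactly the place where the paper spends a separate lemma. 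A smaller point: your claim that the four--state restriction on $(A_k\bmod4,k\bmod2)$ is ``a further consequence of Theorem~3.1 of \cite{gkm}'' is not quite right; it follows from the initial condition $A_1=1$ together with the closure of your recursion, not from $4\nmid a_n$.
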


To prove this theorem, we will first determine exactly when $a_n$ is even. To this end, let $2^r$ be the largest power of $2$ dividing $n$. Since $a_1 = 1$ and, for $n \ge 1$ we have $a_{2n+1} = a_{2n} + a_{n} = a_{2n-1} + 2a_n \equiv a_{2n-1} \pmod{2}$, it is by induction easily seen that, if $n$ is odd (i.e. if $r$ equals $0$), then $a_n$ is odd. This will be the base step in the induction proof of the following:

\begin{mistwoorfour} \label{even}
The parity of $a_n$ only depends on the parity of $r$. More precisely, $a_n \equiv r+1 \pmod{2}$.
\end{mistwoorfour}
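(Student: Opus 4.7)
The plan is to induct on $r$, the $2$-adic valuation of $n$. The base case $r = 0$ (so $n$ odd) has already been handled in the paragraph immediately preceding the lemma: the relation $a_{2k+1} \equiv a_{2k-1} \pmod{2}$ together with $a_1 = 1$ gives $a_n \equiv 1 \equiv r+1 \pmod 2$ for all odd $n$, which is exactly the claim when $r = 0$.

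For the inductive step, I would fix $r \ge 1$ and assume the statement holds for every positive integer whose $2$-adic valuation is less than $r$. Take $n$ with $2^r \| n$ and write $n = 2m$, so that $2^{r-1} \| m$. Applying the defining recursion, $a_n = a_{2m} = a_{2m-1} + a_m$. Since $2m-1$ is odd, the base case gives $a_{2m-1} \equiv 1 \pmod 2$; and the inductive hypothesis applied to $m$ gives $a_m \equiv (r-1)+1 \equiv r \pmod 2$. Adding these yields $a_n \equiv 1 + r \equiv r+1 \pmod 2$, completing the induction.

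There is no genuine obstacle here: the recursion $a_{2m} = a_{2m-1} + a_m$ relates a term of valuation $r$ directly to one of valuation $r-1$ and one of valuation $0$, so induction on $r$ proceeds mechanically once the odd case is in hand. The only thing to be a little careful about is the bookkeeping of the parity of $r$ vs. the parity of $a_m$, but this is immediate from the inductive hypothesis as written.
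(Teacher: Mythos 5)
Your argument is correct and is essentially the same as the paper's: both proceed by induction on the $2$-adic valuation $r$, apply the recursion $a_{2m} = a_{2m-1} + a_m$ to split off one odd-index term and one term of valuation $r-1$, and combine the base case with the inductive hypothesis. The only cosmetic difference is that the paper writes $n = 2^r s$ explicitly while you write $n = 2m$ with $2^{r-1} \| m$.
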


\begin{proof}
Write $n =  2^r \cdot s$ for some positive integers $r$ and $s$, where $s$ is odd. We then get:
\begin{align*}
a_{2^r \cdot s} &= a_{2^r \cdot s - 1} + a_{2^{r-1} \cdot s} \\
&\equiv 1 + (r-1) + 1 \pmod{2} \\
&\equiv r+1 \hspace{42pt} \pmod{2}
\end{align*}
\end{proof}

Note that Lemma \ref{even} implies that $d_{0,2} = \frac{1}{3}$. Indeed, we see that $a_n$ is even precisely when $n \equiv 2 \pmod{4}$, or $8 \pmod {16}$, or $32 \pmod {64}$, etc. In other words, the density of $n$ such that $a_n$ is even, equals $\frac{1}{4} + \frac{1}{16} + \frac{1}{64} + \ldots = \frac{1}{3}$.\\

Lemma \ref{even} will be able to help us prove a few lemmas. Two lemmas that state what happens to $a_n \pmod{8}$ if we multiply $n$ by four, and two lemmas that provide us with base cases.

\begin{fourtimesodd} \label{fourtimesodd}
If $n \in \mathbb{N}$ is odd, $a_{4n} \equiv a_{n} + 4 \pmod{8}$.
\end{fourtimesodd}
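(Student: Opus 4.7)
The plan is to collapse $a_{4n} - a_n$ into a short expression involving smaller terms of the sequence, and then show that this expression is always $4 \pmod 8$ by a telescoping argument on a one-parameter auxiliary quantity.

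Write $n = 2k+1$; the case $k = 0$ is handled directly by $a_4 = 5 \equiv 1 + 4 = a_1 + 4 \pmod 8$. For $k \ge 1$, iterating the recurrence $a_j = a_{j-1} + a_{\lfloor j/2 \rfloor}$ six times downward from $a_{8k+4}$, and collecting like terms, gives
\[
a_{4n} \;=\; a_{8k+4} \;=\; a_{8k} + 4 a_{4k} + 3 a_{2k} + a_{2k+1},
\]
so that $a_{4n} - a_n = T(k) := a_{8k} + 4 a_{4k} + 3 a_{2k}$. It now remains to show $T(k) \equiv 4 \pmod 8$ for every $k \ge 1$.

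My plan is to show that $T$ is constant modulo $8$, using as anchor the direct computation $T(1) = 18 + 20 + 6 = 44 \equiv 4 \pmod 8$. The recurrence immediately telescopes to give
\begin{align*}
a_{2k+2} - a_{2k} &= a_k + a_{k+1}, \\
a_{4k+4} - a_{4k} &= a_{2k} + 2 a_{2k+1} + a_{2k+2}, \\
a_{8k+8} - a_{8k} &= a_{4k} + 2 a_{4k+1} + 2 a_{4k+2} + 2 a_{4k+3} + a_{4k+4},
\end{align*}
after which assembling $T(k+1) - T(k)$ and rewriting $a_{4k+1}, a_{4k+2}, a_{4k+3}, a_{4k+4}$ in terms of $a_{4k}, a_{2k}, a_{2k+1}, a_{2k+2}$ reduces the whole thing modulo $8$ to $4\bigl((a_k + a_{2k}) + (a_{k+1} + a_{2k+2})\bigr)$. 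Lemma \ref{even} then finishes the argument: since $a_m$ and $a_{2m}$ always have opposite parities, both $a_k + a_{2k}$ and $a_{k+1} + a_{2k+2}$ are odd, so their sum is even and $T(k+1) - T(k)$ is a multiple of $8$.

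The main technical obstacle is the bookkeeping in the simplification of $T(k+1) - T(k)$: roughly ten terms with coefficients up to $4$ appear, and one needs to track carefully which combinations collapse to multiples of $8$ and which survive. Everything before and after this step is either an immediate application of the recurrence or a direct consequence of Lemma \ref{even}.
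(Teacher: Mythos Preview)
Your argument is correct. Both your proof and the paper's use the same underlying induction (stepping $n\mapsto n+2$, i.e.\ $k\mapsto k+1$) and the same repeated unwinding of the recurrence; the organisation is just slightly different. The paper treats Lemmas~\ref{fourtimesodd} and~\ref{fourtimeseven} simultaneously by expanding $a_{4n+8}$ down to $a_{4n}$ and then invoking the induction hypothesis directly, with the key simplification being the averaging identity $a_{2n+1}+a_{2n+3}=2a_{2n+2}$. You instead isolate the difference $T(k)=a_{4n}-a_n$ as an explicit expression, telescope $T(k+1)-T(k)$, and close with a parity argument via Lemma~\ref{even}.

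One small remark: your appeal to Lemma~\ref{even} at the end is not actually needed. Once you substitute $a_{2k+2}=a_{2k}+a_k+a_{k+1}$ into your final expression, you get
\[
4\bigl((a_k+a_{2k})+(a_{k+1}+a_{2k+2})\bigr)=8(a_k+a_{2k}+a_{k+1}),
\]
which is identically $0\pmod 8$ with no parity information required. This mirrors the paper's proof, which also never invokes Lemma~\ref{even} for this step.
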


\begin{fourtimeseven} \label{fourtimeseven}
If $n \in \mathbb{N}$ is even, $a_{4n} \equiv a_n \pmod{8}$.
\end{fourtimeseven}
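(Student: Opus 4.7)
The plan is to prove Lemma \ref{fourtimeseven} together with Lemma \ref{fourtimesodd} by establishing the single unified identity $a_{4n} \equiv a_n + 4n \pmod{8}$ valid for every $n \ge 1$, whence the two lemmas fall out by separating the cases $n$ even and $n$ odd.

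The first step is to unwind the defining recursion four times. Writing
\begin{align*}
a_{4n} &= a_{4n-1} + a_{2n}, \\
a_{4n-1} &= a_{4n-2} + a_{2n-1}, \\
a_{4n-2} &= a_{4n-3} + a_{2n-1}, \\
a_{4n-3} &= a_{4n-4} + a_{2n-2},
\end{align*}
and summing yields $a_{4n} - a_{4(n-1)} = a_{2n-2} + 2a_{2n-1} + a_{2n}$. One more application of the recursion, in the rearranged forms $a_{2n-2} = a_{2n-1} - a_{n-1}$ and $a_{2n} = a_{2n-1} + a_n$, collapses the right-hand side to $4a_{2n-1} + a_n - a_{n-1}$.

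Since $2n-1$ is odd, Lemma \ref{even} forces $a_{2n-1}$ to be odd, so $4a_{2n-1} \equiv 4 \pmod 8$. Thus for every $n \ge 2$ we get $a_{4n} \equiv a_{4(n-1)} + (a_n - a_{n-1}) + 4 \pmod 8$. Telescoping this from $k = 2$ up to $k = n$, anchored by the base values $a_1 = 1$ and $a_4 = 5$, produces $a_{4n} \equiv a_n + 4n \pmod 8$, from which Lemma \ref{fourtimeseven} is immediate (and Lemma \ref{fourtimesodd} comes for free from the odd case). The only potential obstacle is the algebraic simplification that turns $a_{2n-2} + 2a_{2n-1} + a_{2n}$ into a clean multiple of $4$ plus a telescoping difference $a_n - a_{n-1}$; the essential external input is Lemma \ref{even}, which pins down the parity of $a_{2n-1}$ and thereby fixes $4a_{2n-1}$ modulo $8$.
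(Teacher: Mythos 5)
Your proposal is correct, and it does take a genuinely different route from the paper, so a comparison is worthwhile. The paper proves Lemmas~\ref{fourtimesodd} and~\ref{fourtimeseven} by induction in steps of two: it expands $a_{4n+8}-a_{4n}$ into a combination of $a_{2n},\dots,a_{2n+4}$, then applies the recursion once more and exploits the arithmetic-progression identity $a_{2n+1}+a_{2n+3}=2a_{2n+2}$ to collapse the error term into the exact multiple $8a_{2n+2}$ of eight. That cancellation is purely algebraic and never invokes Lemma~\ref{even}; the case distinction $k\in\{0,4\}$ is carried entirely by the two base cases $n=1,2$. You instead telescope in steps of one: you reduce $a_{4k}-a_{4(k-1)}$ to $4a_{2k-1}+a_k-a_{k-1}$ and then use Lemma~\ref{even} to pin $4a_{2k-1}\equiv 4\pmod 8$, so each step contributes a constant $+4$. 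Summing from $k=2$ to $n$ with $a_1=1$, $a_4=5$ gives the unified congruence $a_{4n}\equiv a_n+4n\pmod 8$, from which both lemmas follow by reducing $4n$ mod~$8$. What the paper's approach buys is independence from the parity lemma in the inductive step; what yours buys is a single clean formula covering both parities at once and a slightly shorter computation. Your closed form is in fact the mod-$8$ shadow of the sharper congruence $a_{4n}\equiv a_n+8n\pmod{32}$ (for even $n$) noted in the paper's concluding remarks, so the telescoping route is a natural one to pursue if one wants those refinements.
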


\begin{oddeight} \label{oddeight}
For all non-negative integers $n$ the set $\{a_{8n+1}, a_{8n+3}, a_{8n+5}, a_{8n+7} \}$ contains all odd residue classes modulo $8$ exactly once. 
\end{oddeight}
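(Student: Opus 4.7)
The plan is to expand $a_{8n+3}$, $a_{8n+5}$, $a_{8n+7}$ in terms of $a_{8n+1}$ modulo $8$ using the recurrence, and then show that the three successive differences hit distinct residue classes modulo $8$ in a pattern that forces all four values to be distinct odd residues.

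First I would apply the recurrence twice to get the telescoping identities $a_{8n+3} \equiv a_{8n+1} + 2a_{4n+1}$, $a_{8n+5} \equiv a_{8n+3} + 2a_{4n+2}$, and $a_{8n+7} \equiv a_{8n+5} + 2a_{4n+3}$ modulo $8$. Then I would use Lemma \ref{even} to note that $a_{4n+1}$ and $a_{4n+3}$ are odd while $a_{4n+2}$ is even, and invoke the result of \cite{gkm} that $a_k$ is never divisible by $4$ to conclude $a_{4n+2} \equiv 2 \pmod{4}$. Consequently, $2a_{4n+2} \equiv 4 \pmod{8}$, while $2a_{4n+1}$ and $2a_{4n+3}$ each lie in $\{2, 6\} \pmod{8}$.

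Writing $x = a_{8n+1} \bmod 8$, $y = 2a_{4n+1} \bmod 8$, $z = 2a_{4n+3} \bmod 8$, the four residues become $x$, $x+y$, $x+y+4$, $x+y+z+4$. A quick check shows that these are four distinct odd residues modulo $8$ if and only if $y \neq z$, i.e.\ if and only if $a_{4n+1} \not\equiv a_{4n+3} \pmod{4}$. The crux of the proof is therefore to establish this last inequivalence.

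For that final step I would use the recurrence again to write $a_{4n+3} - a_{4n+1} = 2a_{2n+1}$. Since $2n+1$ is odd, Lemma \ref{even} gives that $a_{2n+1}$ is odd, so $a_{4n+3} - a_{4n+1} \equiv 2 \pmod{4}$, which is exactly what we need. Combining everything yields the claim. The only potentially subtle part is the appeal to the non-divisibility by $4$ result from \cite{gkm}; without it one could not rule out $2a_{4n+2} \equiv 0 \pmod{8}$, which would collapse $a_{8n+3}$ and $a_{8n+5}$ into the same residue class.
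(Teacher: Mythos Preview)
Your argument is correct. The telescoping identities, the use of Lemma~\ref{even} for parity, and the appeal to the non-divisibility by $4$ from \cite{gkm} to get $2a_{4n+2}\equiv 4\pmod 8$ are exactly the ingredients the paper uses. Your case check that the four residues $x,\ x+y,\ x+y+4,\ x+y+4+z$ are pairwise distinct iff $y\neq z$ is accurate, and your computation $a_{4n+3}-a_{4n+1}=2a_{2n+1}\equiv 2\pmod 4$ cleanly settles that point. (You might state explicitly that $x=a_{8n+1}$ is odd by Lemma~\ref{even}, so that all four residues are odd, but this is implicit in your setup.)

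The only noteworthy difference from the paper's proof is in the final step. The paper first proves the auxiliary congruence $a_{4n+1}\equiv a_{8n+1}\pmod 4$ (by applying the non-divisibility by $4$ to $a_{8n+2}=a_{8n+1}+a_{4n+1}$), which lets it write the four residues explicitly as $a_{8n+1},\,3a_{8n+1},\,7a_{8n+1},\,5a_{8n+1}\pmod 8$; distinctness is then immediate since $1,3,5,7$ are units. Your route avoids this extra congruence by instead showing $a_{4n+1}\not\equiv a_{4n+3}\pmod 4$ directly from the recurrence. Your path is marginally more self-contained at the end, while the paper's path yields the sharper information that the quadruple is always a cyclic rearrangement of $(1,3,7,5)$, a fact the paper exploits later in its concluding remarks.
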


\begin{eveneight} \label{eveneight}
For all non-negative integers $n$ we have the string of equalities $a_{16n+2} \equiv a_{16n+6} \equiv -a_{16n+10} \equiv -a_{16n+14} \equiv \pm 2 \pmod{8}$. In particular, the set $\{a_{16n+2}, a_{16n+6}, a_{16n+10}, a_{16n+14}\}$ contains both even residue classes modulo $8$ that are not divisible by $4$ exactly twice. 
\end{eveneight}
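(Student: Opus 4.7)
The plan is to reduce the lemma, via telescoping of the recursion, to a couple of trivial parity checks. First I would note that every index $16n+k$ with $k \in \{2,6,10,14\}$ has $2$-adic valuation exactly $1$, so by Lemma~\ref{even} each of the four quantities is even, and since Theorem~$3.1$ of \cite{gkm} forbids $a_m$ from being divisible by $4$, each is $\equiv \pm 2 \pmod{8}$. What remains is to verify the three gap conditions
\begin{align*}
a_{16n+6} - a_{16n+2} &\equiv 0 \pmod{8}, \\
a_{16n+10} - a_{16n+6} &\equiv 4 \pmod{8}, \\
a_{16n+14} - a_{16n+10} &\equiv 0 \pmod{8},
\end{align*}
where the middle line captures the sign flip between $a_{16n+6}$ and $-a_{16n+10}$.

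Next, using the one-step identity $a_m - a_{m-1} = a_{\lfloor m/2 \rfloor}$, I would telescope each of the three gaps and find that each difference equals $a_{8n+j} + 2 a_{8n+j+1} + a_{8n+j+2}$ for $j = 1, 3, 5$, respectively. Applying the recursion twice more gives $a_{8n+j+1} = a_{8n+j} + a_{4n+(j+1)/2}$ and $a_{8n+j+2} = a_{8n+j} + 2 a_{4n+(j+1)/2}$, so each difference collapses to
\[
4 a_{8n+j} + 4 a_{4n+(j+1)/2}.
\]

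A one-line parity check finishes the proof. By Lemma~\ref{even}, $a_{8n+j}$ is odd for $j \in \{1,3,5\}$, while among $a_{4n+1}, a_{4n+2}, a_{4n+3}$ only the middle one is even. Hence for $j = 1$ and $j = 5$ we get $4\cdot(\text{odd}) + 4\cdot(\text{odd}) \equiv 0 \pmod{8}$, while for $j = 3$ we get $4\cdot(\text{odd}) + 4\cdot(\text{even}) \equiv 4 \pmod{8}$, exactly what is required. There is no real obstacle here; the only mild challenge is recognising the right way to group the telescoped differences so that the identity $a_{8n+j+2} = a_{8n+j} + 2 a_{4n+(j+1)/2}$ becomes directly applicable.
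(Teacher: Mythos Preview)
Your proof is correct and follows essentially the same approach as the paper. The paper telescopes the four-step gap to $a_{8n+2k-1} + 2a_{8n+2k} + a_{8n+2k+1}$ and then uses the averaging identity $a_{8n+2k-1} + a_{8n+2k+1} = 2a_{8n+2k}$ to collapse this to $4a_{8n+2k}$ before reading off the parity of $a_{8n+2k}$; your expression $4a_{8n+j} + 4a_{4n+(j+1)/2}$ is literally the same quantity (since $a_{8n+2k} = a_{8n+2k-1} + a_{4n+k}$), just not combined into a single term.
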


\begin{proof}[Proof of Lemmas \ref{fourtimesodd} and \ref{fourtimeseven}]
Let $k = 4$ when $n$ is odd and $k = 0$ when $n$ is even. Then $a_{4n} \equiv a_{n}+k \pmod{8}$ can be checked for $n = 1$ and $n = 2$. Now we will use induction again, so assume $a_{4n} \equiv a_{n} + k \pmod{8}$ for some $n \in \mathbb{N}$. We will then prove that $a_{4(n+2)} \equiv a_{n+2} + k \pmod{8}$ as well. First of all, let us simply use the definition of our sequence a couple of times.

\begin{align*}
a_{4n+8} &= a_{4n+7} + a_{2n+4} \\
&= a_{4n+6} + a_{2n+4} + a_{2n+3} \\
&= a_{4n+5} + a_{2n+4} + 2a_{2n+3} \\
&= a_{4n+4} + a_{2n+4} + 2a_{2n+3} + a_{2n+2} \\
&= a_{4n+3} + a_{2n+4} + 2a_{2n+3} + 2a_{2n+2} \\
&= a_{4n+2} + a_{2n+4} + 2a_{2n+3} + 2a_{2n+2} + a_{2n+1} \\
&= a_{4n+1} + a_{2n+4} + 2a_{2n+3} + 2a_{2n+2} + 2a_{2n+1} \\
&= a_{4n} + a_{2n+4} + 2a_{2n+3} + 2a_{2n+2} + 2a_{2n+1} + a_{2n} \\
&= a_{4n} + a_{n+2} + 3a_{2n+3} + 2a_{2n+2} + 2a_{2n+1} + a_{2n}
\end{align*}

Now that we have reached this equality, we can use our induction hypothesis $a_{4n} \equiv a_{n} + k \pmod{8}$. We will then apply $a_{n} + a_{2n} = a_{2n+1}$ and use the fact that $a_{2n+2}$ is the average of $a_{2n+3}$ and $a_{2n+1}$ to finish the proof.

\begin{align*}
a_{4n+8} &= a_{4n} + a_{n+2} + 3a_{2n+3} + 2a_{2n+2} + 2a_{2n+1} + a_{2n} \\
&\equiv a_n + k + a_{n+2} + 3a_{2n+3} + 2a_{2n+2} + 2a_{2n+1} + a_{2n} \pmod{8} \\
&= a_{n+2} + k + 3a_{2n+3} + 2a_{2n+2} + 3a_{2n+1} \\
&= a_{n+2} + k + 8a_{2n+2} \\
&\equiv a_{n+2} + k \pmod{8} \qedhere
\end{align*}
\end{proof}

\begin{proof}[Proof of Lemma \ref{oddeight}]
First note that both $a_{4n+1}$ and $a_{8n+1}$ are odd by Lemma \ref{even}. It then follows that $a_{4n+1} \equiv a_{8n+1} \pmod{4}$, as we would otherwise get $a_{8n+2} = a_{8n+1} + a_{4n+1} \equiv 0 \pmod{4}$, which is impossible by Theorem $3.1$ in \mbox{\cite{gkm}}. Now we will use this fact and apply the equalities $2a_{4n+2} \equiv 4 \pmod{8}$ and $2a_{4n+1} + 2a_{4n+3} = 4a_{4n+2}$ to show that $a_{8n+1}, a_{8n+3}, a_{8n+5},$ and $a_{8n+7}$ are all distinct when reduced modulo $8$.

\begin{align*}
a_{8n+1} &\equiv a_{8n+1} \pmod{8} \\
a_{8n+3} &= a_{8n+1} + 2a_{4n+1} \equiv 3a_{8n+1} \pmod{8}  \\
a_{8n+5} &= a_{8n+3} + 2a_{4n+2} \equiv 3a_{8n+1} + 4 \equiv 7a_{8n+1} \pmod{8} \\
a_{8n+7} &= a_{8n+1} + 2a_{4n+1} + 2a_{4n+2} + 2a_{4n+3} = a_{8n+1} + 6a_{4n+2}  \equiv a_{8n+1} + 4 \equiv 5a_{8n+1} \pmod{8} \\
\end{align*}

\end{proof}

\newpage
\begin{proof}[Proof of Lemma \ref{eveneight}]
By Lemma \ref{even} and the fact that no element of the sequence is divisible by $4$, we know that $a_{16n+4k+2} \equiv \pm 2 \pmod{8}$ for all $k \in \{0,1,2,3\}$. It therefore suffices to show that $a_{16n+4k+2} \equiv a_{16n+4k-2} \pmod{8}$ for $k \in \{1,3\}$ and $a_{16n+4k+2} \equiv a_{16n+4k-2} + 4 \pmod{8}$ for $k = 2$.

\begin{align*}
a_{16n+4k+2} &= 
 a_{16n+4k-2} + a_{8n+2k+1} + 2a_{8n+2k} + a_{8n+2k-1} \\ 
&= a_{16n+4k-2} + 4a_{8n+2k} \\ 
\end{align*}

Since $a_{8n+2k}$ is even for $k \in \{1,3\}$ and odd for $k = 2$, this finishes the proof.
\end{proof}

By combining the four lemmas we just proved, we obtain the following corollary:

\begin{setsoffour}
For all non-negative integers $k$ and $m$ the following set contains all odd residues classes modulo $8$ exactly once: 

\begin{center}
$P_{k,m} := \{a_{4^m(8k+1)}, a_{4^m(8k+3)}, a_{4^m(8k+5)}, a_{4^m(8k+7)} \}$ 
\end{center}

On the other hand, for all non-negative integers $k$ and $m$ the following set contains two integers congruent to $2 \pmod{8}$ and two integers congruent to $6 \pmod{8}$: 

\begin{center}
$Q_{k,m} := \{a_{4^m(16k + 2)}, a_{4^m(16k + 6)}, a_{4^m(16k + 10)}, a_{4^m(16k + 14)} \}$
\end{center}
\end{setsoffour}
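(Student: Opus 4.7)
The plan is a straightforward induction on $m$. The base case $m = 0$ is immediate: by Lemma \ref{oddeight}, $P_{k,0} = \{a_{8k+1}, a_{8k+3}, a_{8k+5}, a_{8k+7}\}$ realizes each of the four odd residues modulo $8$ exactly once; by Lemma \ref{eveneight}, $Q_{k,0} = \{a_{16k+2}, a_{16k+6}, a_{16k+10}, a_{16k+14}\}$ realizes $2$ and $6$ modulo $8$ each exactly twice.

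For the inductive step, the key observation is simply to keep track of the parity of the inner index being multiplied by $4$, so that one knows which of Lemmas \ref{fourtimesodd} and \ref{fourtimeseven} to invoke. For $Q_{k,m+1}$, each element has the form $a_{4 \cdot 4^m(16k + 4j + 2)}$ with $j \in \{0,1,2,3\}$, and the inner factor $4^m(16k+4j+2)$ is even for every $m \ge 0$. Hence Lemma \ref{fourtimeseven} gives $a_{4^{m+1}(16k+4j+2)} \equiv a_{4^m(16k+4j+2)} \pmod{8}$ term by term, so $Q_{k,m+1}$ realizes exactly the same multiset of residues as $Q_{k,m}$ and the inductive claim for $Q$ is immediate.

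For $P_{k,m+1}$ I split into two sub-cases according to whether $m = 0$ or $m \ge 1$. When $m \ge 1$, each inner index $4^m(8k+2j+1)$ is even, so Lemma \ref{fourtimeseven} again yields termwise congruences mod $8$ and the four residues in $P_{k,m+1}$ are inherited unchanged from $P_{k,m}$. When $m = 0$, the inner indices $8k+2j+1$ are odd, so Lemma \ref{fourtimesodd} gives $a_{4(8k+2j+1)} \equiv a_{8k+2j+1} + 4 \pmod{8}$ for each $j$; since adding $4$ permutes the odd residues modulo $8$ (swapping $1 \leftrightarrow 5$ and $3 \leftrightarrow 7$), the set $P_{k,1}$ still contains each odd residue exactly once, and then the inductive step from $m \ge 1$ carries the conclusion to all higher $m$.

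There is really no serious obstacle here: the whole argument is a bookkeeping exercise once the base case has been extracted from the preceding lemmas. The only subtle point is not to apply Lemma \ref{fourtimesodd} when the relevant inner index is even, and keeping the $m = 0$ step for the $P$-family separate from the $m \ge 1$ steps is the cleanest way to avoid that pitfall.
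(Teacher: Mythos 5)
Your proof is correct and is exactly the argument the paper has in mind when it says the corollary follows by ``combining the four lemmas''; the induction on $m$, the parity tracking of the inner index, and the observation that adding $4$ permutes the odd residues are all the intended bookkeeping.
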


The proof of Theorem \ref{miseight} is now no longer very difficult.

\begin{proof}[Proof of Theorem \ref{miseight}]
For $n < 64$ the theorem is true as the right-hand side is negative, so let us assume $n \ge 64 = 4^3$. This means there exists a non-negative integer $l$ such that $4^{l+3} \le n < 4^{l+4}$. Define the sets $P_{k,m}$ as above for $0 \le m \le l$ and $0 \le k \le \left \lfloor \frac{n}{2 \cdot 4^{m+1}} \right \rfloor - 1$, and define the sets $Q_{k,m}$ as above for $0 \le m \le l$ and $0 \le k \le \left \lfloor \frac{n}{4^{m+2}} \right \rfloor - 1$. Note that all elements of $P_{k,m}$ and $Q_{k,m}$ are smaller than $n$ and that all of these sets are pairwise disjoint. Now we will find lower bounds on $S_{x,8}(n)$ for $x \in \{1, 3, 5, 7\}$ and on $S_{y,8}(n)$ for $y \in \{2, 6\}$, to finish off the proof.

\begin{align*}
S_{x,8}(n) &\ge \#P_{k,m} \\
&=\sum_{m = 0}^l \sum_{k = 0}^{\left \lfloor \frac{n}{2 \cdot 4^{m+1}} \right \rfloor - 1} 1 \\
&\ge \sum_{m = 0}^l \left(\frac{n}{2 \cdot 4^{m+1}} - 1\right) \\
&= \frac{n}{6} - (l+1) - \frac{n}{6 \cdot 4^{l+1}}  \\
&> \frac{n}{6} - \log(n) - 11
\end{align*}

\begin{align*}
S_{y,8}(n) &\ge 2\#Q_{k,m} \\
&= 2 \sum_{m = 0}^l \sum_{k = 0}^{ \left \lfloor \frac{n}{4^{m+2}} \right \rfloor - 1} 1 \\
&\ge \sum_{m = 0}^l \left(\frac{n}{2 \cdot 4^{m+1}} - 2\right) \\
&= \frac{n}{6} - 2(l+1) - \frac{n}{6 \cdot 4^{l+1}} \\
&> \frac{n}{6} - 2\log(n) - 11 \qedhere
\end{align*}

\end{proof}

\section{Congruence classes for other moduli} \label{modthree}
For a positive integer $n$, let $k$ be an integer with $\frac{n}{8} \le k \le \frac{n}{4} - 1$, and consider the set $P_k := \{a_{2k+1}, a_{4k+1}, a_{4k+2}, a_{4k+3}\} \subset \{a_3, a_2, \ldots, a_n\}$. Now, if $a_{2k+1}$ is not divisible by $3$, then one can check that $a_{4k+1}, a_{4k+2}, a_{4k+3}$ are all distinct modulo $3$. In other words, $P_k$ contains at least one integer divisible by $3$. Since it can furthermore be checked that the sets $P_k$ are pairwise disjoint for all $k$ with $\frac{n}{8} \le k \le \frac{n}{4}-1$, it follows that $S_{0,3}(n) \ge \left \lfloor \frac{n}{4} - 1 \right \rfloor - \left \lceil \frac{n}{8} \right \rceil > \frac{n}{8} - 3$, showing a positive lower density of $n$ such that $3$ divides $a_n$. Let us now streamline and generalize this argument. \\

Let $j$ be any positive integer and let $l$ be $\left\lfloor \frac{\log(n)}{\log(2^j)} \right \rfloor$. Then, for all positive integers $i$, $k$ and $w$ with $i \le j$, $\frac{n}{2^{jw+1}} \le k \le \frac{n}{2^{jw}} - 1$ and $w \le l$, define the following sets containing $2^i - 1$ consecutive elements of our sequence starting at $a_{2^ik+1}$: 

\begin{center}
$P_{i,k,w} = \{a_{2^ik + 1}, a_{2^ik + 2}, \ldots, a_{2^ik + 2^{i} - 1}\}$
\end{center}

\begin{pdisjoint} \label{disjoint}
For all $i, k, w$ we have $P_{i, k, w} \subset \{a_3, a_4, \ldots, a_n\}$, and the sets $P_{i,k,w}$ are all pairwise disjoint.
\end{pdisjoint}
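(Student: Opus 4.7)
The plan is to establish two facts about each $P_{i,k,w}$: that its indices lie in $\{3, \ldots, n\}$, and that the sets sit in pairwise disjoint ``dyadic scales'' of the index range, so that disjointness reduces to a simple injectivity argument together with disjointness of index sub-intervals for different $k$.

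For containment, the smallest and largest indices appearing in $P_{i,k,w}$ are $2^i k + 1$ and $2^i(k+1) - 1$. Since $i, k \ge 1$, the former is at least $3$. For the latter, the constraint $k + 1 \le n/2^{jw}$ combined with $i \le j$ and $w \ge 1$ gives $2^i(k+1) \le n$, so the largest index is at most $n - 1 < n$.

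For disjointness, the key observation I would highlight is that every index $m$ appearing in $P_{i,k,w}$ lies in the open interval $(n/2^{jw-i+1},\, n/2^{jw-i})$. Indeed, the constraints $n/2^{jw+1} \le k$ and $k + 1 \le n/2^{jw}$ give $2^i k \ge n/2^{jw-i+1}$ and $2^i(k+1) \le n/2^{jw-i}$, and the indices in $P_{i,k,w}$ lie strictly between $2^i k$ and $2^i(k+1)$. Writing $u := jw - i$, the set $P_{i,k,w}$ is therefore confined to the dyadic interval $I_u := (n/2^{u+1},\, n/2^u)$, and these intervals are pairwise disjoint as $u$ varies over $\mathbb{Z}$.

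The remaining combinatorial point is that the map $(i, w) \mapsto jw - i$ is injective on $\{1, \ldots, j\} \times \mathbb{Z}_{\ge 1}$: the equality $jw_1 - i_1 = jw_2 - i_2$ yields $j(w_1 - w_2) = i_1 - i_2$, and $|i_1 - i_2| \le j - 1 < j$ forces $w_1 = w_2$ and $i_1 = i_2$. So any two overlapping sets $P_{i_1,k_1,w_1}$ and $P_{i_2,k_2,w_2}$ must share the same $(i, w)$, after which the sub-intervals $(2^i k_1, 2^i(k_1+1))$ and $(2^i k_2, 2^i(k_2+1))$ being disjoint forces $k_1 = k_2$. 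The only non-routine step is spotting the right dyadic decomposition; once that is in hand, the rest is mechanical bookkeeping.
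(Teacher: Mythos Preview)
Your argument is correct and actually cleaner than the paper's. The paper proceeds by a direct three-case analysis on which of the parameters differ: it treats $w\ne w'$, then $w=w'$ with $k\ne k'$, then $w=w',k=k'$ with $i\ne i'$, and in each case explicitly compares the largest index of one set with the smallest index of the other. Your route collapses the first and third cases into a single observation: each $P_{i,k,w}$ sits in the dyadic layer $I_u=(n/2^{u+1},n/2^u)$ with $u=jw-i$, and injectivity of $(i,w)\mapsto jw-i$ on $\{1,\ldots,j\}\times\mathbb{Z}_{\ge 1}$ handles both cases at once; the remaining same-layer case reduces to the obvious disjointness of the blocks $(2^ik,2^i(k+1))$ for distinct $k$. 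This is more conceptual and avoids some bookkeeping, though the paper's case split is perfectly serviceable and perhaps easier to verify line by line.

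One small omission worth making explicit: you argue disjointness at the level of \emph{indices}, but the sets $P_{i,k,w}$ are defined as sets of \emph{values} $a_m$. The passage from disjoint index sets to disjoint value sets requires that $m\mapsto a_m$ is injective, which holds since the sequence is strictly increasing; the paper states this explicitly, and you should too.
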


\begin{proof}
Since $i$ and $k$ are positive integers, we get that $2^ik + 1$, which is the smallest index in $P_{i, k, w}$, is at least equal to $2^1 \cdot 1 + 1 = 3$. On the other hand, the largest index in $P_{i, k, w}$ is $2^ik + 2^{i} - 1 \le 2^jk + 2^j - 1$, which is at most $2^j(\frac{n}{2^{jw}} - 1) + 2^j - 1 < \frac{n}{2^{j(w-1)}} \le n$. \\

To see that these sets are parwise disjoint, there are three possibilities to consider when comparing different sets $P_{i,k,w}$ and $P_{i',k',w'}$. In all cases we will prove that the largest index in one of them is smaller than the smallest index in the other one. Since $a_n$ is an increasing function of $n$, we conclude that all elements in one of the sets must be larger than all of the elements in the other set, and they are therefore disjoint. The three different cases we will consider are: either $w \neq w'$, or $w = w'$ and $k \neq k'$, or $w = w'$, $k = k'$ and $i \neq i'$.  \\

Case $1$: $w \neq w'$. Assume without loss of generality $w < w'$. The smallest index in $P_{i,k,w}$ is then equal to $2^ik + 1$, which is at least $2k + 1 \ge \frac{n}{2^{jw}} + 1$. On the other hand, the largest index in $P_{i', k', w'}$ is equal to $2^{i '}k' + 2^{i'} - 1$, which is at most $2^jk' + 2^{j} - 1 \le 2^j\left(\frac{n}{2^{jw'}} - 1\right) + 2^j - 1 = \frac{n}{2^{j(w'- 1)}} - 2^j + 2^j - 1 \le \frac{n}{2^{jw}} - 1$. \\

Case $2$: $w = w, k \neq k'$. Assume without loss of generality $k > k'$. If $i \ge i'$, the smallest index in $P_{i,k,w}$ is equal to $2^ik + 1$, whereas the largest index in $P_{i', k', w'}$ is equal to $2^{i'}k' + 2^{i'} - 1$, which is at most $2^i(k-1) + 2^i - 1 = 2^ik - 1$. On the other hand, if $i < i'$, note that $k \le \frac{n}{2^{jw}} - 1 = \frac{2n}{2^{jw+1}} - 1 \le 2k' - 1$. Therefore in this case, the largest index in $P_{i,k,w}$ is equal to $2^ik + 2^{i} - 1 \le 2^{i}(2k' - 1) + 2^i - 1 < 2^{i+1}k' \le 2^{i'}k'$, which is smaller than the smallest index in $P_{i',k',w'}$.  \\

Case $3$: $w = w, k = k', i \neq i'$. Assume without loss of generality $i > i'$. The smallest index in $P_{i,k,w}$ is then equal to $2^ik + 1$, whereas the largest index in $P_{i', k', w'}$ is equal to $2^{i'}k' + 2^{i'} - 1$, which is at most $2^{i-1}k' + 2^{i-1} - 1 < 2^{i-1}(k'+1) \le 2^{i-1}(2k') = 2^ik' = 2^ik$.

\end{proof}

We now define $P_{k,w}$ as the union of $P_{i,k,w}$ over all $i$ with $1 \le i \le j$. By Lemma \ref{disjoint} all the sets $P_{k,w}$ have the same number of elements and are pairwise disjoint as well. Further observe that if the $j$ elements $a_{2k+1}, a_{4k+1}, \ldots, a_{2^jk+1}$ (which we will call the base elements) are known, then all elements of $P_{k,w}$ are known, as all elements of $P_{k,w}$ can be written as a sum of base elements, by the definition of our sequence $a_n$. Moreover, the way to write an element of $P_{k,w}$ in terms of its base elements is independent of the exact values of $k$ and $w$. Since none of the base elements are ever even by Lemma \ref{even}, let $m_{*}$ be the largest odd divisor of $m$. If we now fix $j$, then for a given modulus $m$ there are $m_{*}^{j}$ possible different values for the base elements modulo $m$ within a set $P_{k,w}$. Every one of these $m_{*}^j$ options can lead to a different set of values of the elements in $P_{k,w}$ modulo $m$. Now we remark that if, for example, for every one of these $m_{*}^j$ options we obtain at least one element that is congruent to $x \pmod{m}$, then it follows that all sets $P_{k,w}$ must contain an element that is congruent to $x \pmod{m}$! And we claim that this leads to a lower bound on $d_{x,m}$.

\begin{lowerdensitytheory} \label{densitytheory}
Let $e_{x,m,j}$ be the minimum number of elements in $P_{k,w}$ that are congruent to $x \pmod{m}$, where the minimum is taken over all $m_{*}^j$ possible different values of the base elements modulo $m$. Then we get the following lower bound:

\begin{equation*} 
S_{x,m}(n) > \left(\frac{e_{x,m,j}}{2^{j+1} - 2}\right)n - \left(\frac{3e_{x,m,j}}{\log(2^j)}\right)\log(n) - e_{x,m,j}
\end{equation*}

In particular, $\underline{d_{x,m}} \ge \frac{e_{x,m,j}}{2^{j+1} - 2}$. 
\end{lowerdensitytheory}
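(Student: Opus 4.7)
The plan is to combine the structural observation preceding the theorem with a geometric-series count over admissible pairs $(k,w)$. First I would formalize the claim that every element of $P_{k,w}$ is a fixed nonnegative integer linear combination of the $j$ base elements $a_{2k+1}, a_{4k+1}, \ldots, a_{2^j k+1}$, where the coefficients depend only on the position within the block and not on $k$ or $w$. This follows by induction on the index using $a_n = a_{n-1} + a_{\lfloor n/2 \rfloor}$: writing an intermediate index as $2^i k + t$ with $1 \le t < 2^i$, one reduces to smaller $t$ and, via the halving summand, to a base element of lower level. Consequently the multiset of residues of $P_{k,w}$ modulo $m$ is determined solely by the residue tuple of its $j$ base elements. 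Since Lemma \ref{even} guarantees each base element is odd, there are at most $m_*^j$ possible tuples, and the definition of $e_{x,m,j}$ as the minimum over these tuples ensures that every $P_{k,w}$ contains at least $e_{x,m,j}$ elements congruent to $x \pmod{m}$.

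Lemma \ref{disjoint} supplies the disjointness property, so
\[
S_{x,m}(n) \;\ge\; e_{x,m,j} \cdot N,
\]
where $N$ counts admissible pairs $(k,w)$. For each $w \in \{1,\ldots,l\}$ the integer interval $[\tfrac{n}{2^{jw+1}}, \tfrac{n}{2^{jw}} - 1]$ contains at least $\tfrac{n}{2^{jw+1}} - 2$ elements, hence
\[
N \;\ge\; \sum_{w=1}^{l}\left(\tfrac{n}{2^{jw+1}} - 2\right) \;=\; \frac{n\bigl(1 - 2^{-jl}\bigr)}{2^{j+1}-2} \;-\; 2l.
\]
From $l = \lfloor \log(n)/\log(2^j) \rfloor$ I would derive both $2^{jl} \ge n/2^j$ (which bounds the geometric tail $\tfrac{n\cdot 2^{-jl}}{2^{j+1}-2}$ by a constant of order $1$) and $l \le \log(n)/\log(2^j) + 1$ (which bounds $2l$). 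Multiplying the resulting lower bound on $N$ through by $e_{x,m,j}$ yields the stated explicit inequality for $S_{x,m}(n)$, and dividing by $n$ and letting $n \to \infty$ gives the $\liminf$ conclusion.

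The main obstacle is not conceptual but bookkeeping: squeezing out exactly the constants $\tfrac{3 e_{x,m,j}}{\log(2^j)}$ and $e_{x,m,j}$ in the claimed error terms requires care with rounding when one passes from real-valued interval lengths to integer counts, and with the various slacks introduced by the floor and ceiling operations in the definitions of $l$ and the $k$-range. The underlying structural picture — disjoint blocks whose residues modulo $m$ are determined by $j$ base elements, with a counting lower bound applied per block — is robust and essentially forced once one unwinds the definitions preceding the theorem.
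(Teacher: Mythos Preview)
Your proposal is correct and follows essentially the same approach as the paper: use the disjointness of the $P_{k,w}$ from Lemma~\ref{disjoint}, apply the definition of $e_{x,m,j}$ to guarantee at least that many hits per block, and then sum a geometric series over $w$ while absorbing the rounding losses into the two lower-order terms. The only cosmetic differences are that the paper records the per-level count as at least $\tfrac{n}{2^{jw+1}} - 3$ (with a strict inequality) rather than your $\tfrac{n}{2^{jw+1}} - 2$, and that the paper treats the structural claim about base elements as already established in the text preceding the theorem rather than re-deriving it inside the proof.
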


\begin{proof}
Since every set $P_{k,w}$ contains at least $e_{x,m,j}$ elements that are congruent to $x \pmod{m}$ by assumption, all we need to do is find a lower bound on the number of different sets $P_{k,w}$ there are. This will be done similarly to how we proved Theorem \ref{miseight} earlier;

\begin{align*}
S_{x,m}(n) &\ge e_{x,m,j}\#P_{k,w} \\
&= e_{x,m,j} \sum_{w = 1}^l \sum_{k = \left\lceil \frac{n}{2^{jw+1}} \right \rceil}^{ \left \lfloor \frac{n}{2^{jw}} \right \rfloor - 1} 1 \\
&> e_{x,m,j} \sum_{w = 1}^l \left(\frac{n}{2^{jw+1}} - 3\right) \\
&= \frac{e_{x,m,j}n}{2^{j+1} - 2} - 3e_{x,m,j}l - \frac{e_{x,m,j}n}{2^{jl} \cdot (2^{j+1} - 2)} \\
&> \frac{e_{x,m,j}n}{2^{j+1} - 2} - \frac{3e_{x,m,j}\log(n)}{\log(2^j)} - e_{x,m,j}
\qedhere
\end{align*}
\end{proof}

To apply Theorem \ref{densitytheory} we choose, for a given residue class $x \pmod{m}$, a suitable value of $j$ for which we can, possibly with the help of a computer, brute-force over all $m_*^j$ possible different values of the base elements modulo $m$ and hope that it leads to a large value of $\frac{e_{x,m,j}}{2^{j+1} - 2}$. We have indeed done this for all $m \le 15$, and the following values were all found with a computer: \\

$e_{0,3,13} = 4708$, $e_{0,5,10} = 235$, $e_{0,6,13} = 1130$, $e_{3,6,13} = 2905$, $e_{0,7,9} = 91$, $e_{0,9,8} = 26$, $e_{0,10,11} = 115$, $e_{5,10,11} = 353$, $e_{0,11,8} = 20$, $e_{3,12,13} = 1284$, $e_{6,12,13} = 1130$, $e_{9,12,13} = 1284$, $e_{0,13,7} = 3$, $e_{0,14,9} = 11$, $e_{7,14,9} = 44$, $e_{0,15,7} = 1$. \\

From these values, one can apply Theorem \ref{densitytheory} to obtain positive lower densities;

\begin{minimumes} \label{minimumes}
For all $m \in \{3, 5, 6, 7, 9, 10, 11, 12, 13, 14, 15\}$ and all $x \not \equiv 0 \pmod{4}$ divisible by the largest odd divisor of $m$, the lower density of $n$ such that $a_n \equiv x \pmod{m}$ is strictly positive. More precisely:

\begin{align*}
\underline{d_{0,3}} &\ge \frac{4708}{2^{14} - 2} > 0.2873 & \underline{d_{0,5}} &\ge \frac{235}{2^{11} - 2} >  0.1148 \\
\underline{d_{0,6}} &\ge \frac{1130}{2^{14} - 2} >  0.0689 &\underline{d_{3,6}} &\ge \frac{2905}{2^{14} - 2} >  0.1773 \\
\underline{d_{0,7}} &\ge \frac{91}{2^{10} - 2} >  0.0890 & \underline{d_{0,9}} &\ge \frac{26}{2^{9} - 2} >  0.0509 \\
\underline{d_{0,10}} &\ge \frac{115}{2^{12} - 2} > 0.0280 & \underline{d_{5,10}} &\ge \frac{353}{2^{12} - 2} > 0.0862 \\
\underline{d_{0,11}} &\ge \frac{20}{2^{9} - 2} > 0.0392 & \underline{d_{3,12}} &\ge \frac{1284}{2^{14} - 2} > 0.0783 \\
\underline{d_{6,12}} &\ge \frac{1130}{2^{14} - 2} > 0.0689 & \underline{d_{9,12}} &\ge \frac{1284}{2^{14} - 2} > 0.0783 \\
\underline{d_{0,13}} &\ge \frac{3}{2^{8} - 2} > 0.0118 & \underline{d_{0,14}} &\ge \frac{11}{2^{10} - 2} > 0.0107 \\
\underline{d_{7,14}} &\ge \frac{44}{2^{10} - 2} > 0.0430 & \underline{d_{0,15}} &\ge \frac{1}{2^{8} - 2} > 0.0039
\end{align*}
\end{minimumes}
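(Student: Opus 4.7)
The plan is to derive each inequality as a direct consequence of Theorem \ref{densitytheory}, with the value of $j$ already fixed for each pair $(x,m)$ in the list displayed just before the theorem statement. Once the sixteen tabulated integer values of $e_{x,m,j}$ are accepted, the bound $\underline{d_{x,m}} \ge \frac{e_{x,m,j}}{2^{j+1} - 2}$ is immediate from Theorem \ref{densitytheory}, and each stated decimal bound is then a routine numerical check (for instance $4708/(2^{14}-2) = 4708/16382 > 0.2873$).

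So the real content of the proof is the verification of the sixteen values of $e_{x,m,j}$, which I would carry out as follows. Fix $(x,m,j)$ and write $m_*$ for the largest odd divisor of $m$. As noted in the paragraph preceding Theorem \ref{densitytheory}, every element of $P_{k,w}$ can be written as a fixed $\mathbb{Z}$-linear combination of the $j$ base elements $a_{2k+1}, a_{4k+1}, \ldots, a_{2^jk+1}$, with coefficients that depend only on the position within $P_{k,w}$ and not on $k$ or $w$. I would precompute these coefficient vectors once, symbolically unrolling $a_t = a_{t-1} + a_{\lfloor t/2 \rfloor}$ for each index $t$ appearing in $P_{k,w}$ until only base indices remain. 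Then, since all base elements are odd by Lemma \ref{even}, their residues modulo $m$ lie in exactly $m_*$ classes; so I enumerate the $m_*^j$ tuples of base residues, compute for each the induced residues modulo $m$ of all elements of $P_{k,w}$, count those equal to $x$, and take the minimum over all tuples. This minimum is $e_{x,m,j}$ by definition.

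The main obstacle is computational: the entry $(x,m,j)=(0,15,7)$ requires $15^7 \approx 1.7 \cdot 10^8$ tuples to enumerate, and several other entries ($m \in \{11,13\}$ with $j\in\{7,8\}$) are of comparable size, which is at the upper edge of what is reasonable by naive enumeration. A practical speedup is to process the base elements one at a time, maintaining for each partial tuple the residues of the sub-collection of elements of $P_{k,w}$ whose coefficient vectors have already been fully determined, and pruning any branch whose running count of $x$-hits is already at least as large as the current candidate for $e_{x,m,j}$. With this pruning, each of the sixteen cases is tractable on a modern computer, and the verified integer values yield the decimal bounds listed in the statement after dividing by $2^{j+1}-2$.
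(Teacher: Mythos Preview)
Your approach is exactly the paper's: the sixteen values $e_{x,m,j}$ are taken as computer-verified, and each density bound then follows at once from Theorem \ref{densitytheory} via $\underline{d_{x,m}} \ge e_{x,m,j}/(2^{j+1}-2)$. The paper itself offers no more than this, simply listing the computed $e$-values and invoking Theorem \ref{densitytheory}; your description of how the enumeration is actually carried out (precomputing the coefficient vectors, then running over all base-residue tuples) is a welcome elaboration of what the paper leaves implicit.

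One small inaccuracy worth flagging: your claim that the odd residues modulo $m$ form exactly $m_*$ classes is only correct when $4 \nmid m$. For $m=12$ the odd classes are $\{1,3,5,7,9,11\}$, six of them, not $m_*=3$. The paper's own text contains the same loose count, but in practice the $m=12$ case is handled using the extra constraint (noted in the paper's remark after the theorem) that all base elements are congruent to one another modulo $4$, which cuts the enumeration back down. For every other modulus in the list $4 \nmid m$, so your count is fine as stated.
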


As a remark, recall that $a_{4k+1} \equiv a_{8k+1} \pmod{4}$ was shown in the proof of Lemma \ref{oddeight} and, more generally, it can be seen that all base elements must be congruent to each other modulo $4$. This can be used to speed up the code when trying to find $e_{x,m,j}$ for any $m$ divisible by $4$. As for finding $e_{x,m,j}$ for more values of $x$ and $m$, note that it is a priori possible that all base elements are congruent to $m^{*} \pmod{m}$, in which case all elements of $P_{k, w}$ are divisible by $m^*$. We therefore see that $e_{x,m,j} = 0$ unless $m^{*}$ divides $x$, and it follows that Theorem \ref{densitytheory} is not strong enough to show a positive lower density for any $x$ not divisible by $m^{*}$.

\newpage
\section{A general conjecture} \label{modalles}
It seems plausible that Theorem \ref{miseight} can be extended to include all residue classes modulo $16$. Furthermore, when $m$ is odd there is a priori no reason to expect $a_n$ to favour certain residue classes modulo $m$ over others. However, by some computer calculations, it looks as though unexpected things might pop up for $m = 32$. For example, it is not even clear that $d_{x,32}$ exists when $x$ is odd. That being said, by checking $e_{x,32,24}$ with a computer and applying Theorem \ref{densitytheory}, one can check $\underline{d_{x,32}} > 0.38$ for all odd $x$. And this is not far off the value of $\frac{1}{24} \approx 0.417$ which would be the naive guess. It is therefore not very clear at the moment what $d_{x,32}$ should be and if it even exists, although it might not be too difficult to resolve this issue one way or another. In any case, the following conjecture for $m$ not a multiple of $32$ does suggest itself:

\begin{densityconjecture} \label{conj}
If $x$ and $m$ are positive integers and $32$ does not divide $m$, then the limit $d_{x,m}$ exists and equals 

$$d_{x,m} = \begin{cases}

\frac{1}{m} &\mbox{if } m \equiv 1 \pmod{2} \\
\frac{2}{3m} &\mbox{if } x \equiv 0 \pmod{2} \mbox{ and } m \equiv 2 \pmod{4} \\
\frac{4}{3m} &\mbox{if } x \equiv 1 \pmod{2} \mbox{ and } m \equiv 2 \pmod{4} \\
\frac{4}{3m} &\mbox{if }  x \not \equiv m \equiv 0 \pmod{4} \\ 
0 &\mbox{if } x \equiv m \equiv 0 \pmod{4} 

\end{cases} $$

\end{densityconjecture}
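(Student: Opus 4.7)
The strategy is to refine the machinery of Section \ref{modthree} to produce a lower bound for $\underline{d_{x,m}}$ that is already tight, and then close the remaining gap by a total-mass argument. Concretely, the plan is to prove $\underline{d_{x,m}} \ge d_{x,m}^{\mathrm{conj}}$ for every residue $x$; since the conjectured values satisfy $\sum_x d_{x,m}^{\mathrm{conj}} = 1$ (a direct check in each of the five cases, essentially because $a_n$ is odd with density $\tfrac{2}{3}$, congruent to $2\pmod{4}$ with density $\tfrac{1}{3}$, and never divisible by $4$), the identity $\tfrac{S_{x,m}(n)}{n} = 1 - \sum_{x'\ne x}\tfrac{S_{x',m}(n)}{n}$ together with superadditivity of $\liminf$ yields
$$\overline{d_{x,m}} \;\le\; 1 - \sum_{x'\ne x}\underline{d_{x',m}} \;\le\; 1 - \sum_{x'\ne x} d_{x',m}^{\mathrm{conj}} \;=\; d_{x,m}^{\mathrm{conj}},$$
so the existence and exact value of $d_{x,m}$ follow at once.

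The real work is in the matching lower bound. Theorem \ref{densitytheory} uses $e_{x,m,j}$, the \emph{minimum} number of $x$-residues across a block $P_{k,w}$, but to reach the conjectured value one has to replace this minimum by the average $\bar{e}_{x,m,j}$ taken over the set $\mathcal{T}_j$ of allowed base-element tuples $(B_1,\ldots,B_j)$ in $(\mathbb{Z}/m\mathbb{Z})^j$ (each $B_i$ is odd as an integer and, by the remark following Theorem \ref{minimumes}, all of them coincide modulo $\gcd(m,4)$). Two ingredients then have to be supplied. First, a purely combinatorial calculation: each element of $P_{k,w}$ is a fixed non-negative integer linear combination of $B_1,\ldots,B_j$, so $\bar{e}_{x,m,j}$ is an explicit average of indicator functions, and one verifies $\bar{e}_{x,m,j}/(2^{j+1}-2) \to d_{x,m}^{\mathrm{conj}}$ as $j\to\infty$ by partitioning the elements of $P_{k,w}$ by the $2$-adic valuation of their index and invoking Lemma \ref{even} to pin down their parity. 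This should be handled case by case against the five-part formula. Second, the tuples $T_k := (a_{2k+1}, a_{4k+1}, \ldots, a_{2^jk+1}) \bmod m$ must equidistribute across $\mathcal{T}_j$ as $k$ varies; combined with the pairwise disjointness from Lemma \ref{disjoint} this upgrades the Section \ref{modthree} lower bound from minimum to average.

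The hard part will be this equidistribution. Heuristically it should hold because the recursion $a_{2n+1}=a_{2n-1}+2a_n$ mixes the odd-indexed subsequence thoroughly modulo any odd prime factor of $m$, while modulo powers of $2$ the rigid structure is already exhausted by Lemmas \ref{fourtimesodd}--\ref{eveneight}. A plausible route is to interpret $k\mapsto T_k$ as a dynamical system on $(\mathbb{Z}/m\mathbb{Z})^j$ driven by the binary expansion of $k$, and to prove irreducibility plus aperiodicity on $\mathcal{T}_j$ whenever $32\nmid m$. Diagnosing precisely what fails at $m=32$ — where, as noted just above the conjecture, the experimental data suggest that $d_{x,32}$ may not even exist for odd $x$ — appears essential, since presumably a new $2$-adic obstruction appears at that level and it is exactly this obstruction that the hypothesis $32\nmid m$ is designed to exclude.
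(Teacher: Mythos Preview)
The paper does not prove this statement: it is explicitly presented as a \emph{conjecture}, supported only by the special case $m=8$ (Theorem~\ref{miseight}), heuristic reasoning, and numerical evidence up to $n=10^8$. There is no proof in the paper for you to compare against.

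As for your proposal on its own merits: the global architecture is sound. The total-mass argument in your first paragraph is correct and is indeed how one would pass from sharp lower bounds on all residues to the existence and value of each $d_{x,m}$. The identification of the key upgrade --- replacing $e_{x,m,j}$ by an average $\bar{e}_{x,m,j}$ over admissible base-tuples --- is also the right diagnosis of why Theorem~\ref{densitytheory} alone cannot reach the conjectured value.

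However, your proposal is a strategy, not a proof, and the gap you yourself flag is genuine and wide open. The equidistribution of $T_k=(a_{2k+1},\ldots,a_{2^jk+1})\bmod m$ over $\mathcal{T}_j$ is precisely the substance of the conjecture; everything else is bookkeeping. Your suggested route (viewing $k\mapsto T_k$ as a symbolic dynamical system and proving irreducibility/aperiodicity) is plausible but you give no argument for it, and there is no reason to expect it to be easy: the map is not a simple skew product, each coordinate of $T_k$ depends on values of $a$ at indices far below $2k$, and the mixing you invoke for odd prime factors of $m$ is exactly what no one has established. There is also a subtler issue you pass over: you define $\mathcal{T}_j$ by the constraints you happen to know (oddness from Lemma~\ref{even}, common residue mod $\gcd(m,4)$ from the remark after Theorem~\ref{minimumes}), but you have not shown these are the \emph{only} constraints on the base tuple. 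Any undiscovered $2$-adic or mod-$p$ relation among $a_{2k+1},a_{4k+1},\ldots$ would shrink the true support of $T_k$ below $\mathcal{T}_j$ and could shift the average away from $d_{x,m}^{\mathrm{conj}}$; indeed, you anticipate that exactly such a relation appears at $m=32$, so ruling it out for $32\nmid m$ is not a formality.
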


Morally speaking, this conjecture states: after taking into consideration that only a third of $a_n$ are even and none are divisible by $4$, $a_n \pmod{m}$ is essentially random. Note that Theorem \ref{miseight} proves this conjecture for $m$ equal to (a divisor of) $8$ and that this conjecture is an ambitious version of Conjecture 2.2 from \mbox{\cite{gkm}}. \\

To give the appearance of cautiousness, we should add that the evidence for Conjecture \ref{conj} could be considered a bit tenuous. It is true for $m = 8$, it would be nice if it were true in general, we have found no reason for it not to be true in general, and computer calculations up to $m = 100$ and $n = 10^8$ do seem to support it.

\newpage

\section{Growth rate} \label{growthrate}
Finally, let us move away from congruence properties and switch our focus to determining how fast $a_n$ grows as a function of $n$.

\begin{growthweak} \label{growthweak}
Define the constants $c_1 = \frac{1}{2 \log(2)} \approx 0.721$ and $c_2 = \frac{1}{2} + \frac{1 + \log(\log(2))}{\log(2)} \approx 1.414$, and let $f(n) = c_1 \log(n) - 2c_1\log(\log(n)) + c_2$. Then for every $n \ge 141$ we have $a_n > n^{f(n)}$. On the other hand, for every $\epsilon > 0$ there exists an $C_{\epsilon}$ such that for all $n \ge 2$ we have $a_n < C_{\epsilon}n^{f(n) + \epsilon}$.
\end{growthweak}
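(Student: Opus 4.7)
The plan is to set $F(n) := n^{f(n)}$ and prove both bounds by induction on $n$ using the raw recursion $a_n = a_{n-1} + a_{\lfloor n/2 \rfloor}$. The lower-bound inductive step then reduces to the analytic inequality $F(n) - F(n-1) \le F(\lfloor n/2 \rfloor)$, and the upper bound to the reverse inequality for $F_\epsilon(n) := n^{f(n)+\epsilon}$ in place of $F$. First I would motivate $f$ heuristically: the smooth analog of the recursion is the delay equation $h'(n) = h(n/2)$, and writing $h(n) = e^{g(n)}$ turns it into $g(n) - g(n/2) = -\log g'(n)$. Substituting the ansatz $g(n) = \alpha(\log n)^2 + \gamma \log n \log\log n + \beta\log n + O(\log\log n)$ and matching coefficients forces $\alpha = c_1$, $\gamma = -2c_1$ and $\beta = c_2$, which is exactly the exponent in $f$ and fixes the constants $c_1,c_2$.

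Next I would build the analytic toolkit. A Taylor-style expansion of $\log F$ yields
\[
\frac{F'(n)}{F(n)} = \frac{\log n - \log\log n}{n\log 2} + \frac{O(1)}{n},
\]
while a more delicate computation gives
\[
\log\frac{F(n/2)}{F(n)} = -\log n + \log\log n - \log\log 2 + O\!\left(\frac{1}{\log n}\right),
\]
so that $F(n/2)/F(n) = \frac{\log n}{n\log 2}\bigl(1 + O(1/\log n)\bigr)$. Combining these, $F(n/2) - F'(n)$ is positive and of size $F(n)\log\log n/(n\log 2)$. Since $F$ is increasing and convex, $F(n) - F(n-1) \le F'(n)$, so this positive gap is exactly the margin that powers the lower-bound induction; it is also wide enough to absorb the loss $F(n/2) - F(\lfloor n/2\rfloor)$ incurred when $n$ is odd. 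The base case is then a finite computational check, and the threshold $n \ge 141$ should be the smallest value at which the explicit error terms become uniformly valid.

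For the upper bound the same expansion gives $F_\epsilon(n/2)/F_\epsilon(n) \sim 2^{-\epsilon}(\log n)/(n\log 2)$ while $F_\epsilon'(n)/F_\epsilon(n)$ is unchanged to leading order, so the ratio $F_\epsilon'(n)/F_\epsilon(\lfloor n/2\rfloor) \to 2^\epsilon > 1$ and the inequality $F_\epsilon(n) - F_\epsilon(n-1) \ge F_\epsilon(\lfloor n/2\rfloor)$ holds with a gap that grows like $2^\epsilon - 1$. This makes the upper-bound induction close for all $n$ beyond some $\epsilon$-dependent threshold, and the finitely many smaller cases are swept into the constant $C_\epsilon$.

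The main obstacle is the rigorous bookkeeping of the lower-order terms. For $\epsilon = 0$ one is working against an asymptotic near-equality: $F'(n)$ and $F(n/2)$ agree to leading order and only disagree at the $\log\log n$ scale, so every intermediate estimate must be sharp enough that the residual margin is not destroyed by finite-difference errors, integer parts, or Taylor remainders. This evidently motivates the long chain of auxiliary lemmas (\emph{fngrows}, \emph{logbylinear}, \emph{basiclog}, \emph{taylorlog}, \emph{taylore}, \emph{nastylog}, \emph{anothernastylog}, \emph{upperbounde}, \emph{basicloglog}, \emph{notherone}) announced earlier in the paper: each one packages a precise Taylor-style inequality for one ingredient of the induction, and together they are what allow the explicit base case to be pushed all the way down to $n = 141$.
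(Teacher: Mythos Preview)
Your proposal is correct and follows essentially the same inductive route as the paper: both reduce the step to comparing $F(n)-F(n-1)$ (resp.\ $F_\epsilon(n)-F_\epsilon(n-1)$) with $F(\lfloor n/2\rfloor)$ (resp.\ $F_\epsilon(\lfloor n/2\rfloor)$) via the same Taylor-type expansions, and the chain of auxiliary lemmas is exactly the bookkeeping you describe. One small correction: $n=141$ is not where the analytic induction first closes but merely where the inequality $a_n>n^{f(n)}$ first holds; the paper checks $141\le n\le 10^5$ by computer and only runs the analytic argument from $n\ge 10^5$ onward.
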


Before we can start, we need some quick lemmas.

\begin{fngrows} \label{fngrows}
For all $x \in \mathbb{R}$ with $x > 8$, $f(x)$ is an increasing function of $x$.
\end{fngrows}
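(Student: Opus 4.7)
The plan is to verify monotonicity by a direct derivative computation, since $f$ is differentiable on $(1, \infty)$ and the claim is about $x > 8$.

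First I would compute $f'(x)$. Differentiating term by term gives
\begin{equation*}
f'(x) = \frac{c_1}{x} - \frac{2c_1}{x \log(x)} = \frac{c_1}{x}\left(1 - \frac{2}{\log(x)}\right).
\end{equation*}
Since $c_1 = \frac{1}{2\log(2)} > 0$ and $x > 0$, the sign of $f'(x)$ is the sign of $1 - \frac{2}{\log(x)}$, which is positive precisely when $\log(x) > 2$, i.e., when $x > e^2$.

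Finally, I would note that $e^2 \approx 7.389 < 8$, so for every $x > 8$ we have $\log(x) > \log(8) = 3\log(2) > 2$, hence $f'(x) > 0$ and $f$ is strictly increasing on $(8, \infty)$. There is no real obstacle here; the only thing to be mildly careful about is keeping track of which logarithm is the natural one (it is, since $c_1 = \frac{1}{2\log(2)}$ already absorbs the base change), and checking that the numerical bound $8$ is indeed larger than $e^2$.
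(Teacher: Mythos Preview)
Your proof is correct and essentially follows the same approach as the paper: a derivative computation showing positivity exactly for $x > e^2 \approx 7.389$. The only cosmetic difference is that the paper first passes to $g(x) = 2^{f(x)-c_2} = \sqrt{x}/\log(x)$ and differentiates that, whereas you differentiate $f$ directly; both routes land on the same threshold.
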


\begin{logbylinear} \label{logbylinear}
For all $x \in \mathbb{R}$ with $x > 200$ we have $\frac{2c_1\log(x)}{x} < 0.05$.
\end{logbylinear}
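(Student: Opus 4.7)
The plan is to reduce the claim to showing that the function $g(x) := \frac{2c_1 \log(x)}{x}$ is eventually decreasing, and then to verify its value at the boundary point $x = 200$. Since $c_1 = \frac{1}{2\log(2)}$, the inequality is equivalent to $\frac{\log(x)}{x \log(2)} < 0.05$, or $\frac{\log(x)}{x} < 0.05\log(2)$.

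First I would differentiate: $g'(x) = \frac{2c_1(1 - \log(x))}{x^2}$, which is strictly negative precisely when $\log(x) > 1$, i.e., when $x > e$. Since $200 > e$, the function $g$ is strictly decreasing on $[200, \infty)$, so it suffices to check that $g(200) \le 0.05$.

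For the numerical check, at $x = 200$ we have $g(200) = \frac{\log(200)}{200 \log(2)}$. Using the coarse bounds $\log(200) < \log(e^6) = 6$ and $\log(2) > 0.69$, we get $g(200) < \frac{6}{200 \cdot 0.69} = \frac{6}{138} < 0.044 < 0.05$, which completes the proof for all $x > 200$ by monotonicity.

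There is no real obstacle here; this is a routine one-variable calculus argument, and the inequality at the boundary holds with considerable room to spare, so no delicate numerical estimation is required.
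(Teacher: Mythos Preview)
Your argument is correct and essentially identical to the paper's own proof: both compute the derivative $g'(x) = \frac{2c_1(1-\log x)}{x^2}$, observe that it is negative for $x > e$, and then verify the boundary value at $x = 200$. The only cosmetic difference is that you supply explicit numerical bounds ($\log 200 < 6$, $\log 2 > 0.69$) for the check at $x=200$, whereas the paper simply asserts that one can check this.
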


\begin{basiclog} \label{basiclog}
For all $a,x \in \mathbb{R}$ with $a > 0$ and $x > 1$ we have $\log(x) + \frac{a}{x+a} < \log(x+a) < \log(x) + \frac{a}{x}$.
\end{basiclog}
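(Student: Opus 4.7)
The plan is to reduce both inequalities simultaneously to the strict monotonicity of $1/t$. I would start from the identity $\log(x+a) - \log(x) = \int_x^{x+a} \frac{dt}{t}$, which holds for any $x > 0$ and $a > 0$, and observe that the integrand is continuous and strictly decreasing on $[x, x+a]$. Hence $1/t$ is bounded below by $1/(x+a)$ and above by $1/x$ on this interval, with equality only at the respective endpoints. Integrating these bounds over an interval of length $a$ then yields $\frac{a}{x+a} < \int_x^{x+a} \frac{dt}{t} < \frac{a}{x}$, and rearranging gives precisely the two-sided bound of the lemma. Strictness is automatic because $1/t$ is nonconstant on any subinterval of positive length.

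If an integral-free argument were preferable, an equally short alternative is to substitute $u = a/x > 0$ and reduce the claim to the standard pair $\frac{u}{1+u} < \log(1+u) < u$. These follow by noting that the difference functions $u - \log(1+u)$ and $\log(1+u) - \frac{u}{1+u}$ both vanish at $u = 0$ and have derivatives $\frac{u}{1+u}$ and $\frac{u}{(1+u)^2}$ respectively, each strictly positive on $(0, \infty)$. Either way there is essentially no obstacle to overcome: the lemma is a one-line consequence of the monotone bounding of an integral, and the hypothesis $x > 1$ is not actually needed, since the argument works for every $x > 0$. The only real decision is presentational.
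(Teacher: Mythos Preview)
Your primary argument is correct and coincides with the paper's own proof: write $\log(x+a)-\log(x)=\int_x^{x+a}\frac{dt}{t}$ and bound the integrand on $[x,x+a]$ by its endpoint values $\frac{1}{x+a}$ and $\frac{1}{x}$. Your remark that the hypothesis $x>1$ can be relaxed to $x>0$ is also correct, and your alternative derivative-based argument via $u=a/x$ is a valid equivalent route.
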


\begin{taylorlog} \label{taylorlog}
For all $x \in \mathbb{R}$ with $x > 1$ we have $\log\left(1 + \frac{1}{x}\right) > \frac{1}{x} - \frac{1}{2x^2}$.
\end{taylorlog}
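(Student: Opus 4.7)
The plan is to set $g(x) := \log(1 + 1/x) - 1/x + 1/(2x^2)$ and prove $g(x) > 0$ on $(1, \infty)$ by combining strict monotonicity with the limit of $g$ at infinity.

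First I would differentiate. Using $\frac{d}{dx}\log(1 + 1/x) = \frac{1}{1 + 1/x} \cdot \left(-\frac{1}{x^2}\right) = -\frac{1}{x(x+1)}$, a direct computation gives
\[
g'(x) = -\frac{1}{x(x+1)} + \frac{1}{x^2} - \frac{1}{x^3}.
\]
Writing the first two terms over the common denominator $x^2(x+1)$ collapses them to $\frac{1}{x^2(x+1)}$, and hence
\[
g'(x) = \frac{1}{x^2(x+1)} - \frac{1}{x^3} = -\frac{1}{x^3(x+1)},
\]
which is strictly negative on $(0, \infty)$. Consequently $g$ is strictly decreasing there, while $\lim_{x \to \infty} g(x) = 0$ is immediate since each of $\log(1 + 1/x)$, $1/x$, and $1/(2x^2)$ tends to $0$. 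Combining the two facts yields $g(x) > 0$ for every finite $x > 0$, which in particular proves the lemma for $x > 1$.

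There is no real obstacle; one only needs to keep the sign correct in the derivative of $\log(1 + 1/x)$ and verify the small algebraic simplification to $-1/\bigl(x^3(x+1)\bigr)$. A slicker alternative expands $\log(1 + 1/x)$ as the alternating Taylor series $\sum_{k \ge 1} (-1)^{k-1}/(kx^k)$, which is valid for $x > 1$, and then brackets the tail $\sum_{k \ge 3}$ into consecutive pairs, each of which is positive because $(2k+2)x > 2k+1$ whenever $x > 1$. I prefer the derivative route since it sidesteps any appeal to convergence of an infinite series.
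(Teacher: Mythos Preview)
Your argument is correct. The derivative computation is right: $g'(x)=-\dfrac{1}{x^3(x+1)}<0$ for $x>0$, so $g$ is strictly decreasing on $(0,\infty)$ with $\lim_{x\to\infty}g(x)=0$, which forces $g(x)>0$ for every $x>0$ and in particular for $x>1$.

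The paper takes exactly the ``slicker alternative'' you mention at the end: it simply writes out the alternating Taylor series
\[
\log\Bigl(1+\tfrac{1}{x}\Bigr)=\tfrac{1}{x}-\tfrac{1}{2x^2}+\tfrac{1}{3x^3}-\cdots
\]
and reads off the inequality from the fact that the tail starting at $\tfrac{1}{3x^3}$ is positive (an alternating series with strictly decreasing terms truncated after a negative term). Your derivative route is a genuinely different path: it avoids any appeal to series convergence or the alternating-series remainder estimate, and as a bonus it gives the inequality on all of $(0,\infty)$ rather than only $x>1$. The paper's version is shorter but leans on a standard fact it does not spell out; yours is fully self-contained at the cost of one small algebraic simplification.
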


\begin{taylore} \label{taylore}
For all $x \in \mathbb{R}$ with $x \ge \frac{1}{2}$ we have $e^{\frac{-1}{2x}} > 1 - \frac{1}{2x}$.
\end{taylore}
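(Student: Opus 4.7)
The strategy is to reduce the statement to the standard elementary inequality $e^y > 1 + y$, valid for every real $y \neq 0$. Setting $y = -\tfrac{1}{2x}$, which is nonzero for every $x \ge \tfrac{1}{2}$, immediately rewrites the right-hand side $1 - \tfrac{1}{2x}$ as $1 + y$ and the left-hand side $e^{-1/(2x)}$ as $e^y$, so the claim follows from the general inequality applied at this particular value of $y$.

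The only thing left to justify is that $e^y > 1 + y$ whenever $y \neq 0$. Define $g(y) = e^y - 1 - y$; then $g(0) = 0$ and $g'(y) = e^y - 1$, which is strictly negative for $y < 0$ and strictly positive for $y > 0$. Hence $g$ is strictly decreasing on $(-\infty, 0)$ and strictly increasing on $(0, \infty)$, so it attains its unique global minimum at $y = 0$, giving $g(y) > 0$ for all $y \neq 0$. Alternatively, one could appeal to the Taylor expansion $e^y = 1 + y + \sum_{k \ge 2} y^k / k!$ and observe that the tail is strictly positive at our particular $y < 0$, since grouping consecutive terms $\frac{y^{2k}}{(2k)!} + \frac{y^{2k+1}}{(2k+1)!} = \frac{y^{2k}}{(2k+1)!}(2k+1 + y) > 0$ for $|y| \le 1$.

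The role of the hypothesis $x \ge \tfrac{1}{2}$ is not actually needed for the inequality itself (it holds for every $x > 0$, and trivially for $x < 0$ as well, since then the left-hand side exceeds $1$ while the right-hand side is at most $1$); it simply guarantees that the right-hand side $1 - \tfrac{1}{2x}$ lies in $[0, 1)$, which is presumably the regime in which the lemma is applied later. There is no substantial obstacle: the statement is a one-line consequence of the strict convexity of $\exp$.
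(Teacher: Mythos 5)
Your proof is correct, but it takes a slightly different route from the paper. The paper proves this lemma (bundled together with Lemma \ref{taylorlog}) by writing out the Taylor series $e^{-1/(2x)} = 1 - \frac{1}{2x} + \frac{1}{2(2x)^2} - \ldots$ and truncating after the negative term $-\frac{1}{2x}$; the positivity of the discarded tail is where the hypothesis $x \ge \frac{1}{2}$ implicitly enters, since it guarantees the terms decrease in magnitude so that the alternating tail is positive. Your primary argument instead invokes the global inequality $e^y > 1 + y$ for $y \neq 0$, proved by analyzing $g(y) = e^y - 1 - y$; this is cleaner in that it needs no hypothesis on $x$ at all, and your observation that $x \ge \frac{1}{2}$ is superfluous for the inequality itself is accurate. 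Your secondary Taylor-series argument, with the explicit pairing $\frac{y^{2k}}{(2k)!} + \frac{y^{2k+1}}{(2k+1)!} = \frac{y^{2k}}{(2k+1)!}(2k+1+y) > 0$, is essentially a more carefully justified version of what the paper does, since the paper's ``$> 1 - \frac{1}{2x}$'' step leaves the positivity of the tail to the reader. Both approaches are fine; yours is marginally more self-contained and more general.
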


\begin{nastylog} \label{nastylog}
For all $n \in \mathbb{N}$ with $n \ge 4$ we have $\left(1 + \frac{\log(2)}{\log(\frac{n}{2})}\right)^{\frac{\log(\frac{n}{2})}{\log(2)}} > e - \frac{1}{\log(n)}$.
\end{nastylog}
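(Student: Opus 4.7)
The plan is to substitute $u = \log_2(n/2) = \log(n/2)/\log(2)$; this satisfies $u \ge 1$ whenever $n \ge 4$, and $(u+1)\log(2) = \log(n/2) + \log(2) = \log(n)$, so the desired inequality becomes $(1+1/u)^u > e - 1/((u+1)\log(2))$ for every $u$ of the form $\log_2(n/2)$ with $n \in \mathbb{N}$ and $n \ge 4$.

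For $u > 1$, Lemma \ref{taylorlog} yields $\log(1+1/u) > 1/u - 1/(2u^2)$, hence $u\log(1+1/u) > 1 - 1/(2u)$, and exponentiating gives $(1+1/u)^u > e \cdot e^{-1/(2u)}$. Lemma \ref{taylore} (applied at $x = u \ge 1/2$) then gives $e^{-1/(2u)} > 1 - 1/(2u)$, so multiplying produces $(1+1/u)^u > e - e/(2u)$. It therefore suffices to prove $e/(2u) \le 1/((u+1)\log(2))$, which rearranges to $u \ge e\log(2)/(2 - e\log(2))$. Using the elementary bounds $e < 2.7183$ and $\log(2) < 0.6932$ one checks that this threshold is below $17$, so the chain of inequalities settles the lemma for every $n$ with $\log_2(n/2) \ge 17$, that is, for $n \ge 2^{18}$.

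The hard part will be covering the finitely many remaining $n \in \{4, 5, \ldots, 2^{18} - 1\}$: the inequality is remarkably tight at $n = 4$ (the two sides differ by only about $0.003$), so the estimate $(1+1/u)^u \ge e - e/(2u)$ is simply too lossy in that range. I would finish by verifying these cases directly with a numerical computation of both sides; alternatively, one can cut the threshold down sharply by keeping more terms in the Taylor expansion. For instance, replacing Lemma \ref{taylore} by the sharper odd-index truncation $e^{-1/(2u)} > 1 - 1/(2u) + 1/(8u^2) - 1/(48u^3)$ (valid because the alternating series for $e^{-y}$ has decreasing terms for $y \le 1$ and the next omitted term $1/(384u^4)$ is positive) lowers the analytic threshold to about $u \ge 12$, leaving only a short and easily checked list of small $n$.
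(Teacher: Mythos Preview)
Your argument is correct and essentially identical to the paper's own proof: both substitute $u = \log(n/2)/\log(2)$, combine Lemmas~\ref{taylorlog} and~\ref{taylore} to obtain $(1+1/u)^u > e - e/(2u)$, deduce the claim for $n \ge 2^{18}$, and relegate the remaining $n$ to a direct numerical check. Your added remark about sharpening the Taylor truncation to shrink the numerical range is a nice touch not present in the paper, but the core route is the same.
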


\begin{upperbounde} \label{upperbounde}
For all $n \in \mathbb{N}$ we have $e < \left(1 + \frac{1}{n}\right)^{n+1}$.
\end{upperbounde}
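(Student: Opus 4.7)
The plan is to reduce the inequality $e < (1 + 1/n)^{n+1}$ to a logarithmic statement that can be handled by Lemma \ref{taylorlog}, which is already stated earlier in the list. Taking logarithms of both sides, the inequality is equivalent to $(n+1)\log(1 + 1/n) > 1$. So it suffices to prove this second form.

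First I would dispose of the case $n = 1$ by direct computation: $(1 + 1/1)^{1+1} = 4 > e$. For $n \ge 2$ we have $n > 1$, so Lemma \ref{taylorlog} (applied with $x = n$) gives
\begin{equation*}
\log\!\left(1 + \frac{1}{n}\right) > \frac{1}{n} - \frac{1}{2n^2}.
\end{equation*}
Multiplying both sides by $n+1 > 0$ yields
\begin{equation*}
(n+1)\log\!\left(1 + \frac{1}{n}\right) > \frac{n+1}{n} - \frac{n+1}{2n^2} = 1 + \frac{1}{2n} - \frac{1}{2n^2} = 1 + \frac{n-1}{2n^2}.
\end{equation*}
Since $n \ge 2$, the remainder term $\frac{n-1}{2n^2}$ is strictly positive, so $(n+1)\log(1 + 1/n) > 1$, and exponentiating gives the claim.

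There is no real obstacle here: the whole argument is a one-line application of the already-proved Taylor-type bound on $\log(1 + 1/x)$, followed by the observation that $\frac{1}{2n} > \frac{1}{2n^2}$ for $n \ge 2$. The only thing to remember is that Lemma \ref{taylorlog} is stated for $x > 1$ (strictly), which is why the case $n = 1$ has to be checked separately by hand.
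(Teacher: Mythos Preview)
Your proof is correct. You take logarithms and invoke Lemma~\ref{taylorlog} with $x=n$, whereas the paper instead applies Lemma~\ref{taylore} with $x=\frac{n+1}{2}$: it writes $e = \big(e^{-1/(n+1)}\big)^{-(n+1)}$, uses $e^{-1/(n+1)} > 1 - \frac{1}{n+1}$, and observes that $\big(1 - \frac{1}{n+1}\big)^{-1} = 1 + \frac{1}{n}$. Both arguments are essentially one-line Taylor estimates drawn from the pool of lemmas already set up; the only practical difference is that the paper's choice of lemma covers $n=1$ automatically (since $x=\frac{n+1}{2}\ge \frac12$ there), so no separate base case is needed. Your route via Lemma~\ref{taylorlog} forces the small case split you noted, but is otherwise equally clean.
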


\begin{anothernastylog} \label{anothernastylog}
For all $n \in \mathbb{N}$ with $n \ge 2$ we have $2c_1\log(\log(\frac{n+1}{2})) > 2c_1\log(\log(n)) - \frac{1}{\log\left(\frac{n+1}{2}\right)}$.
\end{anothernastylog}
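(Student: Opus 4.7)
The inequality to be proved can be rewritten, after dividing both sides by $2c_1$ and using the identity $\frac{1}{2c_1}=\log(2)$, in the equivalent form
\[
\log(\log(n)) - \log\!\bigl(\log(\tfrac{n+1}{2})\bigr) \;<\; \frac{\log(2)}{\log(\tfrac{n+1}{2})}.
\]
My plan is to bound the left-hand side by applying the upper half of Lemma \ref{basiclog}, which gives $\log(x+a) < \log(x) + a/x$ whenever $x > 1$ and $a > 0$.

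Concretely, set $L := \log(\tfrac{n+1}{2})$ and observe that $\log(n) = L + \log\!\bigl(\tfrac{2n}{n+1}\bigr)$, where the quantity $a := \log\!\bigl(\tfrac{2n}{n+1}\bigr)$ satisfies $0 < a < \log(2)$ for every $n \ge 2$ (the lower bound because $2n>n+1$, the upper bound because $2n<2(n+1)$). Assuming $L > 1$, Lemma \ref{basiclog} applied with $x = L$ gives $\log(\log(n)) < \log(L) + a/L < \log(L) + \log(2)/L$, and multiplying through by $2c_1$ and rearranging yields exactly the claim.

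The only obstacle is the hypothesis $x > 1$ in Lemma \ref{basiclog}, which here becomes $L > 1$, i.e.\ $(n+1)/2 > e$, forcing $n \ge 5$. For the three leftover values $n \in \{2,3,4\}$ I would handle the inequality by direct numerical verification; each reduces to a concrete comparison among logarithms of small rationals, and in all three cases the left-hand side exceeds the right by a comfortable positive margin (roughly $1.7$, $0.8$, and $0.5$ respectively), so nothing subtle is required.
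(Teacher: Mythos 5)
Your proof is correct and follows the same essential idea as the paper's: apply the upper bound in Lemma \ref{basiclog} to control $\log(\log(n)) - \log\bigl(\log(\tfrac{n+1}{2})\bigr)$, then use the identity $2c_1\log(2) = 1$ to convert the resulting $\log(2)/\log(\tfrac{n+1}{2})$ into $1/\log(\tfrac{n+1}{2})$. The one structural difference is in the decomposition: the paper sets $x = \log(\tfrac{n+1}{2})$ and $a = \log(2)$ so that $x + a = \log(n+1)$, and then replaces $\log(\log(n+1))$ by the smaller $\log(\log(n))$ as a final step; you instead set $a = \log(\tfrac{2n}{n+1})$ so that $x + a = \log(n)$ exactly, and then bound $a < \log(2)$. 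The two routes are essentially equivalent in effort. What you add --- and what the paper silently skips --- is explicit attention to the hypothesis $x > 1$ of Lemma \ref{basiclog}, which fails for $n \in \{2,3,4\}$ since $\log(\tfrac{n+1}{2}) < 1$ there; your direct numerical check of those three cases patches a small gap present in the paper's stated argument. (One could also note that the proof given for Lemma \ref{basiclog} actually works for all $x > 0$, so the restriction $x > 1$ in its statement is stronger than necessary, but your handling is equally valid and self-contained.)
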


\begin{basicloglog} \label{basicloglog}
For all $n \in \mathbb{N}$ with $n \ge 2$ we have $\log(\log(n+1)) - \log(\log(n)) < \frac{1}{n\log(n)}$.
\end{basicloglog}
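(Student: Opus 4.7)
The plan is to apply the Mean Value Theorem to the function $g(x) = \log(\log(x))$ on the interval $[n, n+1]$. For $n \ge 2$ we have $\log(x) \ge \log(2) > 0$ throughout this interval, so $g$ is differentiable there with derivative $g'(x) = \frac{1}{x \log(x)}$. Hence there is some $c \in (n, n+1)$ with
$$
\log(\log(n+1)) - \log(\log(n)) = \frac{1}{c\log(c)}.
$$
Since $c > n \ge 2$, we also have $\log(c) > \log(n) > 0$, and so $c\log(c) > n\log(n)$. Rearranging gives $\frac{1}{c\log(c)} < \frac{1}{n\log(n)}$, which is exactly the desired inequality.

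An alternative route that avoids invoking MVT directly is to write
$$
\log(\log(n+1)) - \log(\log(n)) = \log\!\left(1 + \frac{\log(n+1)-\log(n)}{\log(n)}\right)
$$
and then apply Lemma \ref{basiclog} twice: once to bound the outer logarithm above by its argument minus one, and once to estimate $\log(n+1) - \log(n) < \frac{1}{n}$. This path is slightly fiddlier because Lemma \ref{basiclog} requires $x > 1$ while $\log(2) < 1$, so the boundary case $n = 2$ would need a separate direct check (easily done, since numerically the left side is about $0.46$ while the right side is about $0.72$). For this reason the MVT version is cleaner and uniform across all $n \ge 2$.

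I do not expect any real obstacle here. The only points that need to be verified are that $\log(x) > 0$ on the interval of interest, which holds because $n \ge 2$, and that $x\log(x)$ is strictly increasing there, which is immediate from differentiating. The rest is a one-line application of the Mean Value Theorem followed by the monotonicity observation.
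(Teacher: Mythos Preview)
Your proof is correct and essentially the same as the paper's: the paper writes $\log(\log(n+1)) - \log(\log(n)) = \int_n^{n+1} \frac{1}{x\log(x)}\,dx$ and bounds the integrand by its value at the left endpoint, while you invoke the Mean Value Theorem to obtain $\frac{1}{c\log(c)}$ for some $c \in (n,n+1)$ and bound that. Both arguments rest on the same derivative computation and the same monotonicity of $x\log(x)$, so there is no substantive difference.
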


\begin{notherone} \label{notherone}
For all $n \in \mathbb{N}$ with $n \ge 4$ we have $\frac{2c_1(n+2)\log(n+1)}{n\log(n)} < 3$.
\end{notherone}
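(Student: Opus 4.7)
The plan is to rewrite the left-hand side as a product of two factors, each of which is a decreasing function of $n$ on the range $n \ge 4$, so that verifying the inequality at $n=4$ alone suffices. Concretely, I would write
\[
\frac{2c_1(n+2)\log(n+1)}{n\log(n)} \; = \; \frac{1}{\log(2)}\left(1 + \frac{2}{n}\right)\cdot\frac{\log(n+1)}{\log(n)},
\]
using the definition $2c_1 = \tfrac{1}{\log(2)}$. The factor $1 + \tfrac{2}{n}$ is clearly decreasing in $n$. For the logarithmic ratio, I would apply Lemma~\ref{basiclog} with $a = 1$, $x = n$ to obtain $\log(n+1) < \log(n) + \tfrac{1}{n}$, and hence
\[
\frac{\log(n+1)}{\log(n)} \; < \; 1 + \frac{1}{n\log(n)}.
\]
Since $n\log(n)$ is increasing, the right-hand side above is also decreasing in $n$, so the product is bounded above for all $n \ge 4$ by its value at $n = 4$.

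Substituting $n = 4$ then yields the bound
\[
\frac{2c_1(n+2)\log(n+1)}{n\log(n)} \; < \; \frac{1}{\log(2)}\cdot\frac{3}{2}\cdot\left(1 + \frac{1}{8\log(2)}\right) \; = \; \frac{3}{2\log(2)} + \frac{3}{16\log(2)^2}.
\]
Using the crude estimate $\log(2) > 0.69$, the first summand is less than $2.18$ and the second is less than $0.40$, so their sum is well below $3$, completing the proof.

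I expect no serious obstacle: the whole argument is a one-line algebraic rewrite followed by an invocation of Lemma~\ref{basiclog} and a numerical estimate at $n = 4$. The only mild subtlety is verifying the monotonicity of both factors simultaneously, but since each is manifestly decreasing in $n$, this is immediate.
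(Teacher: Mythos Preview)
Your proof is correct and follows essentially the same approach as the paper's: the paper also factors the expression as $2c_1 \cdot (1+\tfrac{2}{n}) \cdot \tfrac{\log(n+1)}{\log(n)}$, bounds the logarithmic ratio by $1 + \tfrac{1}{n\log(n)}$ (which is exactly the upper bound in Lemma~\ref{basiclog}), and then evaluates each factor at $n=4$, obtaining $1.5 \cdot 1.5 \cdot 1.2 < 3$. Your version is slightly more explicit about the monotonicity and organizes the final numerical estimate differently, but the substance is identical.
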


All of these lemmas are arguably basic and we only include their proofs for completeness' sake.

\begin{proof}[Proof of Lemma \ref{fngrows}]
Since $f(x)$ is an increasing function if, and only if, $g(x) = 2^{f(x) - c_2}$ is an increasing function, it is sufficient to prove that the latter increases for $x > 8$. 

\begin{align*}
g(x) &= 2^{f(x) - c_2} \\
&= e^{\log(2)(c_1 \log(x) - 2c_1\log(\log(x)))} \\
&= \frac{\sqrt{x}}{\log(x)}
\end{align*}

Now we apply the quotient rule for derivatives to show that the derivative of $g(x)$ is positive for $x > 8$.

\begin{align*}
g'(x) &= \frac{\log(x) \cdot \frac{1}{2}x^{-\frac{1}{2}} - \sqrt{x} \cdot \frac{1}{x}}{\log(x)^2} \\
&= \frac{\frac{1}{2}\log(x) - 1}{\sqrt{x}\log(x)^2}
\end{align*}

The latter is positive if, and only if, $\frac{1}{2}\log(x) - 1$ is positive, which happens for $x > e^2 \approx 7.389$.
\end{proof}

\begin{proof}[Proof of Lemma \ref{logbylinear}]
One can check that $\frac{2c_1\log(x)}{x}$ is smaller than $0.05$ for $x = 200$. Again applying the quotient rule, we see that its derivative is equal to $\frac{2c_1 - 2c_1\log(x)}{x^2}$. Since this is negative for $x > e$, we conclude that $\frac{2c_1\log(x)}{x}$ is a decreasing function for all $x > 200 > e$ and the claim follows.
\end{proof} 

\begin{proof}[Proof of Lemma \ref{basiclog}]
Writing $\log(x+a)$ as an integral and we get $\log(x+a) = \int_{z=1}^{x+a} \frac{1}{z} \, dz = \int_{z=1}^{x} \frac{1}{z} \, dz + \int_{z=x}^{x+a} \frac{1}{z} \, dz = \log(x) + \int_{z=x}^{x+a} \frac{1}{z} \, dz$. And the latter integral can be lower and upper bounded by $\frac{a}{x+a}$ and $\frac{a}{x}$, respectively.
\end{proof}

\begin{proof}[Proof of Lemmas \ref{taylorlog} and \ref{taylore}]
These immediately follow from looking at the appropriate Taylor series.

\begin{align*}
\log\left(1 + \frac{1}{x}\right) &= \sum_{k=1}^{\infty} \frac{(-1)^{k+1}}{kx^k} = \frac{1}{x} - \frac{1}{2x^2} + \frac{1}{3x^3} - \ldots > \frac{1}{x} - \frac{1}{2x^2} \\
e^{\frac{-1}{2x}} &= \sum_{k=0}^{\infty} \frac{(-1)^k}{(2x)^kk!} = 1 - \frac{1}{2x} + \frac{1}{2(2x)^2} - \ldots > 1 - \frac{1}{2x} \qedhere
\end{align*}
\end{proof}

\begin{proof}[Proof of Lemma \ref{nastylog}]
We will prove this Lemma for $n \ge 2^{18}$ by applying Lemmas \ref{taylorlog} and \ref{taylore} with $x = \frac{\log(\frac{n}{2})}{\log(2)}$. One can check the remaining values with a computer. 

\begin{align*}
\left(1 + \frac{1}{x}\right)^x &= e^{x \log\left(1 + \frac{1}{x}\right)} \\
&> e^{1 - \frac{1}{2x}} \\
&> e - \frac{e}{2x} \\
&= e - \frac{e \log(2)}{2 \log(\frac{n}{2})} \\
&> e - \frac{1}{\frac{18}{17} \log(\frac{n}{2})} \\
&= e - \frac{1}{\log(n) + (\frac{1}{17}\log(n) - \frac{18}{17}\log(2))} \\
&\ge e - \frac{1}{\log(n) + (\frac{1}{17}\log(2^{18}) - \frac{18}{17}\log(2))} \\
&= e - \frac{1}{\log(n)} \qedhere
\end{align*}
\end{proof}

\begin{proof}[Proof of Lemma \ref{upperbounde}]
We apply Lemma \ref{taylore} with $x = \frac{n+1}{2}$.

\begin{align*}
e &= \frac{1}{e^{-1}} \\
&= \frac{1}{\left(e^{\frac{-1}{n+1}} \right)^{n+1}} \\
&< \frac{1}{\left(1 - \frac{1}{n+1} \right)^{n+1}} \\
&= \left(1 + \frac{1}{n}\right)^{n+1} \qedhere
\end{align*}
\end{proof}

\begin{proof}[Proof of Lemma \ref{anothernastylog}]
By the change of variables $y = x+a$, the upper bound in Lemma \ref{basiclog} can be rewritten as $\log(y - a) > \log(y) - \frac{a}{y-a}$. Now we plug in $y = \log(n+1)$ and $a = \log(2)$ to get $\log(\log(n+1) - \log(2)) > \log(\log(n+1)) - \frac{\log(2)}{\log(n+1) - \log(2)}$. We will now apply the equalities $\log(n+1) - \log(2) = \log\left(\frac{n+1}{2}\right)$ and $2c_1\log(2) = 1$ to finish the proof;

\begin{align*}
2c_1\log\left(\log\left(\frac{n+1}{2}\right)\right) &= 2c_1\log(\log(n+1) - \log(2)) \\
&> 2c_1\log(\log(n+1)) - \frac{2c_1\log(2)}{\log(n+1) - \log(2)} \\
&= 2c_1\log(\log(n+1)) - \frac{1}{\log\left(\frac{n+1}{2}\right)} \\
&> 2c_1\log(\log(n)) - \frac{1}{\log\left(\frac{n+1}{2}\right)} \qedhere
\end{align*}
\end{proof}

\begin{proof}[Proof of Lemma \ref{basicloglog}]
Since the derivative of $\log(\log(x))$ is equal to $\frac{1}{x \log(x)}$ we get 

\begin{align*}
\log(\log(n+1)) - \log(\log(n)) &= \int_{x=n}^{n+1} \frac{1}{x \log(x)} \, dx \\
&< \frac{1}{n \log(n)} \qedhere
\end{align*}
\end{proof}

\begin{proof}[Proof of Lemma \ref{notherone}]
\begin{align*}
\frac{2c_1(n+2)\log(n+1)}{n\log(n)} &= 2c_1 \cdot \left(\frac{n+2}{n}\right) \cdot \left(\frac{\log(n+1)}{\log(n)}\right) \\
&< 2c_1\left(1 + \frac{2}{n}\right)\left(1 + \frac{1}{n \log(n)}\right) \\
&\le 1.5 \cdot 1.5 \cdot 1.2 \\
&< 3 \qedhere
\end{align*}
\end{proof}

\begin{proof}[Proof of Theorem \ref{growthweak}]
With all of this necessary evil out of the way, we can finally get to the proof of our lower and upper bounds, and we shall start with the lower bound. With a computer it can be checked that the inequality $a_n > n^{f(n)}$ holds for all $n$ with $141 \le n \le 10^5$. Now we use induction, so let $n \ge 10^5$ be a positive integer such that $a_k > k^{f(k)}$ for all $k$ with $141 \le k \le n$. In particular it holds for $k = n$ and $k = \left \lfloor \frac{n+1}{2} \right \rfloor$. Then we will prove $a_k > k^{f(k)}$ for $k = n+1$ as well. Note that we may assume $a_{\left \lfloor \frac{n+1}{2} \right \rfloor} > \left(\frac{n}{2}\right)^{f\left(\frac{n}{2}\right)}$ by the induction hypothesis and Lemma \ref{fngrows}.

\begin{align*}
a_{n+1} &= a_n + a_{\left \lfloor \frac{n+1}{2} \right \rfloor} \\
&> n^{f(n)} + \left(\frac{n}{2} \right)^{c_1 \log\left(\frac{n}{2}\right) - 2c_1\log(\log(\frac{n}{2})) + c_2} \\
&= n^{f(n)} + \left(\frac{n}{2} \right)^{f(n) - \frac{1}{2} + (2c_1\log(\log(n)) - 2c_1\log(\log(\frac{n}{2}))} \\
&= n^{f(n)} \left(1 + \frac{\sqrt{2}}{\sqrt{n} \cdot 2^{f(n)}} \cdot \left(\frac{n}{2}\right)^{2c_1\log(\log(n)) - 2c_1\log(\log(\frac{n}{2})} \right) \\
&= n^{f(n)} \left(1 +  \frac{2c_1\log(n)}{en} \cdot \left(\frac{n}{2}\right)^{2c_1\log(\log(n)) - 2c_1\log(\log(\frac{n}{2})} \right) \\
&= n^{f(n)} \left(1 +  \frac{2c_1\log(n)}{en} \cdot \left(1 + \frac{\log(2)}{\log(\frac{n}{2})}\right)^{\frac{\log(\frac{n}{2})}{\log(2)}} \right) \\
&> n^{f(n)} \left(1 + \frac{2c_1\log(n)}{en} \cdot \left(e - \frac{1}{\log(n)}\right)\right) \\
&= n^{f(n)} \left(1 + \frac{2c_1\log(n)}{n} - \frac{2c_1}{en} \right) \\
&> n^{f(n)} \left(1 + \frac{2c_1\log(n) - 0.05}{n-1} - \frac{2c_1}{e(n-1)} \right) \\
&> n^{f(n)} \left(1 + \frac{2c_1\log(n+1) - \frac{2c_1}{n} - 0.05 - \frac{2c_1}{e}}{n-1} \right) \\
&> n^{f(n)} \left(1 + \frac{2c_1\log(n+1) - 0.6}{n-1}\right) 
\end{align*}

Where the last inequality uses $n \ge 75$. On the other hand, we also have an upper bound on $(n+1)^{f(n+1)}$.

\begin{align*}
(n+1)^{f(n+1)} &< (n+1)^{f(n) + \frac{c_1}{n}} \\
&= n^{f(n)} \cdot \left(1 + \frac{1}{n}\right)^{f(n)} \cdot e^{\frac{c_1\log(n+1)}{n}} \\
&< n^{f(n)} \cdot \left(1 + \frac{1}{n-1}\right)^{f(n)} \cdot \left(\left(1+\frac{1}{n-1}\right)^n\right)^{\frac{c_1\log(n+1)}{n}} \\
&< n^{f(n)} \cdot \left(1 + \frac{1}{n-1}\right)^{2c_1\log(n+1) - 2c_1\log(\log(n)) + c_2}
\end{align*}

Here let us define $g(n) = \left\lceil2c_1\log(n+1) - 2c_1\log(\log(n)) + c_2\right\rceil$ in order to apply the binomial theorem. Note by the way that, for $n \ge 10^5$, we get $2c_1\log(\log(n)) > 3.52 > c_2 + 1 + 1.1$, implying $g(n) < 2c_1\log(n+1) - 1.1$.

\begin{align*}
n^{f(n)} \cdot \left(1 + \frac{1}{n-1}\right)^{2c_1\log(n+1) - 2c_1\log(\log(n)) + c_2} &\le n^{f(n)} \cdot \left(1 + \frac{1}{n-1}\right)^{g(n)} \\
&= n^{f(n)} \cdot \sum_{i = 0}^{g(n)} \binom{g(n)}{i} \frac{1}{(n-1)^i} \\
&= n^{f(n)} \cdot \left(1 + \frac{g(n)}{n-1} + \sum_{i = 2}^{g(n)} \binom{g(n)}{i} \frac{1}{(n-1)^i}\right) \\
&< n^{f(n)} \cdot \left(1 + \frac{g(n)}{n-1} + \frac{2^{g(n)}}{(n-1)^2} \right) \\
&< n^{f(n)} \cdot \left(1 + \frac{2c_1\log(n+1) - 1.1 + \frac{n+1}{2^{1.1}(n-1)}}{n-1}\right) \\
&< n^{f(n)} \cdot \left(1 + \frac{2c_1\log(n+1) - 0.6}{n-1}\right) \\
\end{align*}

Combining the bounds proves the desired lower bound on $a_{n+1}$. \\

To prove the upper bound, we also need a few definitions. First define $h(n) = \left(\frac{n+1}{2} \right)^{\frac{c_1}{n}} \cdot \left(1 + \frac{1}{n} \right)^{f(n) - \frac{1}{2} + \epsilon}$ and note that $h(n)$ converges to $1$ as $n$ goes to infinity. Now choose $N \ge 9$ large enough such that $2^{\epsilon} > \displaystyle \max_{n \ge N} \left(h(n)\textstyle\left(\frac{\log(n)}{\log(n)-\log(\log(n))-c_1^{-1}}\right)\left(\frac{n+1}{n}\right)\right)$. Such an $N$ exists since all terms in the product converge to $1$. And finally, define $C_{\epsilon} = a_N$. Then certainly we have $a_n \le a_N = C_{\epsilon} < C_{\epsilon}n^{f(n) + \epsilon}$ for all $n$ with $2 \le n \le N$. Now we once again use induction, so let $n \ge N$ be a positive integer such that $a_k < C_{\epsilon}k^{f(k) + \epsilon}$ for all $k$ with $2 \le k \le n$. Then we will prove $a_k < C_{\epsilon}k^{f(k) + \epsilon}$ for $k = n+1$ as well. \\

\begin{align*}
a_{n+1} &= a_n + a_{\left \lfloor \frac{n+1}{2} \right \rfloor} \\
&< C_{\epsilon}n^{f(n) + \epsilon} + C_{\epsilon}\left(\frac{n+1}{2} \right)^{c_1 \log\left(\frac{n+1}{2}\right) - 2c_1\log\left(\log\left(\frac{n+1}{2}\right)\right) + c_2 + \epsilon} \\
&< C_{\epsilon}n^{f(n) + \epsilon} + C_{\epsilon}\left(\frac{n+1}{2} \right)^{\left(c_1 \log\left(\frac{n}{2}\right)+\frac{c_1}{n}\right) - \left(2c_1\log(\log(n)) - \frac{1}{\log\left(\frac{n+1}{2}\right)}\right) +  c_2 + \epsilon} \\
&= C_{\epsilon}n^{f(n) + \epsilon} + C_{\epsilon}\left(\frac{n+1}{2} \right)^{\frac{1}{\log\left(\frac{n+1}{2}\right)}} \cdot\left(\frac{n+1}{2} \right)^{\frac{c_1}{n}} \cdot \left(1 + \frac{1}{n} \right)^{f(n) - \frac{1}{2} + \epsilon} \cdot \left(\frac{n}{2} \right)^{f(n) - \frac{1}{2} + \epsilon} \\
&= C_{\epsilon}n^{f(n) + \epsilon} + C_{\epsilon}eh(n)\left(\frac{n}{2} \right)^{f(n) - \frac{1}{2} + \epsilon} \\
&= C_{\epsilon}n^{f(n) + \epsilon} \left(1 + \frac{eh(n)}{2^{c_2 - \frac{1}{2} + \epsilon}} \cdot \frac{\log(n)}{n} \right) \\
&< C_{\epsilon}n^{f(n) + \epsilon} \left(1 + \frac{2c_1\log(n) - 2c_1\log(\log(n)) - 2}{n+1} \right) 
\end{align*}

On the other hand, we also have a lower bound on $C_{\epsilon}(n+1)^{f(n+1) + \epsilon}$.

\begin{align*}
C_{\epsilon}(n+1)^{f(n+1) + \epsilon} &= C_{\epsilon}n^{f(n) + \epsilon}\left(1 + \frac{1}{n}\right)^{f(n) + \epsilon} \left(n+1\right)^{f(n+1) - f(n)} \\
&> C_{\epsilon}n^{f(n) + \epsilon} \left(1 + \frac{1}{n+1}\right)^{f(n) + \epsilon} \left(n+1\right)^{\frac{c_1}{n+1}} \left(n+1\right)^{\frac{-2c_1}{n \log(n)}}\\
&= C_{\epsilon}n^{f(n) + \epsilon} \left(1 + \frac{1}{n+1}\right)^{f(n) + \epsilon} e^{\frac{c_1\log(n+1)}{n+1}} e^{\frac{-2c_1\log(n+1)}{n \log(n)}} \\
&> C_{\epsilon}n^{f(n) + \epsilon} \left(1 + \frac{1}{n+1}\right)^{f(n) + \epsilon + (n+1)\left(\frac{c_1\log(n+1)}{n+1}\right) + (n+2)\left(\frac{-2c_1\log(n+1)}{n\log(n)}\right)}  \\
&> C_{\epsilon}n^{f(n) + \epsilon} \left(1 + \frac{1}{n+1}\right)^{f(n) + \epsilon + c_1 \log(n) - 3} \\
&> C_{\epsilon}n^{f(n) + \epsilon} \left(1 + \frac{1}{n+1}\right)^{2c_1 \log(n) - 2c_1\log(\log(n)) - 2} \\
&> C_{\epsilon}n^{f(n) + \epsilon} \left(1 + \frac{2c_1\log(n) - 2c_1\log(\log(n)) - 2}{n+1}\right)  \\
\end{align*}

We again obtain the desired inequality by combining the upper and lower bounds.

\end{proof}

\section{Concluding thoughts and remarks}
Lemmas \ref{fourtimesodd}, \ref{fourtimeseven}, \ref{oddeight} and \ref{eveneight} can all be suitably generalized. For example, the congruence $a_{4n} \equiv a_{n} \pmod{8}$ from Lemma \ref{fourtimeseven} actually holds modulo $16$ and can be even be further generalized to $a_{4n} \equiv a_{n} + 8n \pmod{32}$. The only thing that needs to be added to the proof is that, at the final equality, one should realize that for even $n$, $a_{2n+2}$ is divisible by $2$ but not by $4$. The congruence from Lemma \ref{fourtimesodd} can similarly be be generalized to an equality that holds modulo $32$, by rewriting it as $a_{4n} \equiv 5a_n \pmod{32}$. As for the statement of Lemma \ref{oddeight}, it can be slightly strengthened by noting that there are only two distinct possibilities; either $(a_{8n+1}, a_{8n+3}, a_{8n+5}, a_{8n+7}) \pmod{8} = (1, 3, 7, 5)$ or $(a_{8n+1}, a_{8n+3}, a_{8n+5}, a_{8n+7}) \pmod{8} = (7, 5, 1, 3)$. This can be shown by conditioning on the parity of $a_{4n+4}$. If $a_{4n+4}$ is even, then $a_{8n+9} = a_{8n+7} + 2a_{4n+4} \equiv a_{8n+7} + 4 \pmod{8}$. Whereas if $a_{4n+4}$ is odd, then $a_{4n+4} = a_{4n+3} + a_{2n+2} \equiv a_{4n+3} + 2 \pmod{4}$, so that $a_{8n+9} = a_{8n+5} + 2a_{4n+4} + 2a_{4n+3} \equiv a_{8n+5} \pmod{8}$. In both cases we are done by induction by applying the $4$ congruences from the proof of Lemma \ref{oddeight}. Finally, the proof of Lemma \ref{eveneight} essentially shows that the set $\{a_{16n+2}, a_{16n+6}, a_{16n+10}, a_{16n+14} \}$ contains all residue classes modulo $16$ that are divisible by $2$ but not by $4$. \\

The above remarks can potentially be used to resolve Conjecture \ref{conj} for $m = 16$. And of course, with more efficient coding or more computing power so that more values of $j$ can be checked, all lower bounds from Theorem \ref{minimumes} can be improved without too much effort. On the other hand, for any odd value of $m$, we have not yet even proved that the upper density of $n$ such that $a_n$ is divisible by $m$ is bounded away from $1$. As we mentioned before, Theorem \ref{densitytheory} is unfortunately not strong enough for this purpose, so it would require a new idea. That being said, we have proven, for example, that every residue class modulo $5$ occurs infinitely often, and so do $1 \pmod{6}$ and $5 \pmod{6}$. Since these have not yet led to a positive lower density however, we have decided to omit these proofs. Finally, it would be nice if the existence of $d_{x, 32}$ could be resolved for odd $x$. \\

None of these problems seem out of reach necessarily, so anyone is encouraged to grab a pen and piece of paper, and start thinking. 

\section{Acknowledgements}
The author would like to thank Mar Curc\'o-Iranzo for helpful comments, even though they were mostly ignored.

\addcontentsline{toc}{section}{Bibliography}

\end{document}